\newtheorem{Thm}{Theorem}
\newtheorem{Prop}[Thm]{Proposition}
\newtheorem{Cor}[Thm]{Corollary}
\newtheorem{Lemma}[Thm]{Lemma}
\newtheorem*{Question}{Question}
\theoremstyle{remark}
\newtheorem{Remark}[Thm]{Remark}
\theoremstyle{definition}
\newtheorem*{Def}{Definition}
\title[Degree-inverting involution on $M_n(\mathbb{F})$ and $UT_n(\mathbb{F})$]{Degree-inverting involution on full square and triangular matrices}
\author{Lais S. da Fonseca}
\address{Department of Mathematics, State University of Maring\'a, Maring\'a, PR, Brazil.}
\email[L. Spada]{laisspada2@gmail.com}
\author{Ednei A. Santulo Jr.}
\address{Department of Mathematics, State University of Maring\'a, Maring\'a, PR, Brazil.}
\email[E. A. Santulo Jr.]{easjunior@uem.br}
\author{Felipe Y. Yasumura}
\address{Department of Mathematics, State University of Maring\'a, Maring\'a, PR, Brazil.}
\email[F. Yasumura]{felipeyukihide@gmail.com}
\thanks{L. S. da Fonseca was financed by the Coordena\c c\~ao de Aperfei\c coamento de Pessoal de N\'ivel Superior - Brasil (CAPES) - Finance Code 001}
\thanks{F. Y. Yasumura was financed by the Coordena\c c\~ao de Aperfei\c coamento de Pessoal de N\'ivel Superior - Brasil (CAPES) - Finance Code 001}
\begin{document}
\begin{abstract}
In this short note, we classify the degree-inverting involution on the full square and triangular matrices.
\end{abstract}
\maketitle

\section{Introduction}
Graded rings appear naturally in several branches of Mathematics and Physics. For instance, one can construct a grading from a filtered algebra, a local valuation ring, a polynomial ring, an automorphism of finite order of an algebra, a finite-dimensional Lie algebra over an algebraically closed field of characteristic zero, etc.

Recall that a $G$-grading on an algebra $\mathcal{A}$ is a vector space decomposition $\mathcal{A}=\bigoplus_{g\in G}\mathcal{A}_g$ such that $\mathcal{A}_g\mathcal{A}_h\subseteq\mathcal{A}_{gh}$, for all $g,h\in G$. Some graded algebras are endowed also with a graded involution, in the following sense: $\psi$ is an involution of $\mathcal{A}$, and $\psi(\mathcal{A}_g)\subseteq\mathcal{A}_{g^{-1}}$, for any $g\in G$. Examples include: the usual transposition of square matrices with any good grading, the usual involution on Leavitt path algebras endowed with their usual grading, etc. It is worth mentioning that, in a recent work \cite{HazV}, the authors proved that the graded involution enriches the structure of the Graded Grothendieck group of a graded ring. Thus, understanding gradings and graded involution on a given algebra seems to be an interesting problem. From now on, we will refer the graded involution as \emph{degree-inverting involution}.

On the other hand, in \cite{BSZ2005}, the authors call a \emph{graded involution} an involution satisfying $\psi(\mathcal{A}_g)\subseteq\mathcal{A}_g$, for all $g\in G$. They proved that the degree-preserving involutions are fundamental to give a description of group gradings on some simple Lie algebras, a question raised by Patera and Zassenhaus \cite{PZ1989}. After the contribution of several authors, the classification of the degree-presernving involutions on matrix algebras, and the gradings on simple Lie algebras are essentially complete \cite{BK2010,BSZ2001,BSZ2005,BZ2002,DM2006} (among others), see also the monograph \cite{EK2013}. Thus, degree-preserving involution is an essential tool as well.

In this paper, using the ideas of the degree-preserving case \cite{Elduque,BK2010} (see also \cite{EK2013}), we classify degree-inverting involutions on matrix algebras and on upper triangular matrices, improving the results of \cite{FM2017}.

This paper is divided as follows: we include a few preliminary theory in Section \ref{preliminaries}. Then, we provide partial results for degree-inverting involution on graded division algebras (Section \ref{grad_div_alg}). Next, we copy the theory presented in \cite[Section 2.4]{EK2013} (see also the paper by Elduque \cite{Elduque}) to study the matrix algebra case in Section \ref{matrix}. Finally, in Section \ref{utn}, we obtain results for the upper triangular matrices case.

\section{Preliminaries\label{preliminaries}}
\subsection{Graded Algebras}
We shall work with graded algebras rather than graded rings, as follows. Let $G$ be any group. We say that an algebra $\mathcal{A}$ is $G$-graded if there exists a vector-space decomposition $\mathcal{A}=\bigoplus_{g\in G}\mathcal{A}_g$ such that $\mathcal{A}_g\mathcal{A}_h\subseteq\mathcal{A}_{gh}$, for all $g,h\in G$. The subspace $\mathcal{A}_g$ is called \emph{homogeneous component of degree $g$}. A nonzero element $x\in\mathcal{A}_g$ is called a homogeneous element of degree $g$. We denote $\deg x=g$.

A map $f:\mathcal{A}\to\mathcal{B}$ between two $G$-graded algebras is called a \emph{graded homomorphism} if $f$ is an algebra homomorphism, and $f(\mathcal{A}_g)\subseteq\mathcal{B}_g$ for all $g\in G$. If, moreover, $f$ is an isomorphism, then $f$ is called a graded isomorphism; in this case, $\mathcal{A}$ and $\mathcal{B}$ are said to be isomorphic.

A \emph{graded division algebra} is an associative algebra $\mathcal{D}$ with $1$, where each nonzero homogeneous element $x\in\mathcal{D}$ is invertible. 

Now let $\mathcal{R}=M_n(\mathbb{F})$ be a matrix algebra endowed with a $G$-grading. Then the graded version of the Density Theorem tells us that we can find a graded division algebra $\mathcal{D}$, $\dim\mathcal{D}=\ell^2$, and a sequence $(g_1,\ldots,g_m)$, such that $\mathcal{R}\cong M_m(\mathbb{F})\otimes\mathcal{D}$, where the grading is given by
\begin{equation}\label{grad}
\deg e_{ij}\otimes d=g_i\deg(d)g_j^{-1},\quad d\in\mathcal{D}\text{ homogeneous}.
\end{equation}

Let $\mathcal{A}=\bigoplus_{g\in G}\mathcal{A}_g$ be a $G$-graded algebra. We say that $V$ is a \emph{$G$-graded right $\mathcal{A}$-module} if $V$ is a right $\mathcal{A}$-module and there exists a vector space decomposition $V=\bigoplus_{g\in G}V_g$ such that $V_h\mathcal{A}_g\subseteq V_{hg}$, for all $g,h\in G$. Similarly we define the notion of graded left modules.

Given two $G$-graded right $\mathcal{A}$-modules $V$ and $W$, we say that $f:V\to W$ is a \emph{graded map of degree $g$} if $f$ is an $\mathcal{A}$-homomorphism, and $f(V_h)\subseteq W_{gh}$, for all $h\in G$. The graded maps of degree $1$ are also known as graded $\mathcal{A}$-homomorphism. We denote $\mathrm{Hom}_g(V,W)$ the set of all graded maps of degree $g$; and let $\mathrm{Hom}_\text{gr}(V,W)=\bigoplus_{g\in G}\mathrm{Hom}_g(V,W)$. If $V$ and $W$ are finite-dimensional, then we have $\mathrm{Hom}(V,W)=\mathrm{Hom}_\text{gr}(V,W)$, so $\mathrm{Hom}(V,W)$ gets a vector-space grading.

Now, let $\mathcal{D}$ be a finite-dimensional graded division algebra, and let $V$ be a finite-dimensional $G$-graded right $\mathcal{D}$-module. Then $\mathcal{R}=\mathrm{End}_\mathcal{D}(V)=\mathrm{Hom}(V,V)$ is a $G$-graded algebra isomorphic to a matrix algebra. Moreover, $V$ is a $G$-graded left $\mathcal{R}$-module.

Finally, we provide a precise definition of the following:
\begin{Def}
Let $\mathcal{A}=\bigoplus_{g\in G}\mathcal{A}_g$ be a $G$-graded algebra. An involution $\psi$ on $\mathcal{A}$ is a \emph{degree-inverting involution} if $\psi(\mathcal{A}_g)\subseteq\mathcal{A}_{g^{-1}}$, for all $g\in G$.
\end{Def}
In this paper, \emph{involution} will mean a first kind involution, that is, a $C(\mathcal{A})$-linear map, where $C(\mathcal{A})$ is the center of $\mathcal{A}$.

\subsection{Factor sets}
Let $T$ be a finite group. A map $\sigma:T\times T\to\mathbb{F}^\times$, where $\mathbb{F}^\times$ is the set of invertible elements of $\mathbb{F}$, is called a \emph{2-cocycle} or a \emph{factor set} if
$$
\sigma(u,v)\sigma(uv,w)=\sigma(u,vw)\sigma(v,w),\quad\forall u,v,w\in T.
$$
These objects are interesting and intensively studied in the context of cohomology of groups (see, for instance, \cite{Kap1,Kap2}). However, we do not need such generalities, and we limit ourselves within the theory we will need.

Denote by $Z^2(T,\mathbb{F}^\times)$ the set of all factor sets. Since $\mathbb{F}^\times$ is commutative with respect to the product, the $Z^2(T,\mathbb{F}^\times)$ acquires a natural structure of abelian group, by point-wise product.

We can construct algebras from factor sets. Given an arbitrary map $\sigma:T\times T\to\mathbb{F}^\times$ denote by $\mathbb{F}^\sigma T$ the following algebra: $\mathbb{F}^\sigma T$ has a basis $\{X_u\mid u\in T\}$, and the product is defined by $X_uX_v=\sigma(u,v)X_{uv}$. Note that $\mathbb{F}^\sigma T$ is associative if and only if $\sigma\in Z^2(T,\mathbb{F}^\times)$. For instance, if $\sigma=1$ (the constant function), then $\mathbb{F}^\sigma T$ is the group algebra of $T$. Next, we investigate the isomorphism classes of algebras given by factor sets.

For any arbitrary map $\lambda:T\to\mathbb{F}^\times$, we obtain a factor set $\delta\lambda$ by the formula
$$
\delta\lambda(uv):=\frac{\lambda(u)\lambda(v)}{\lambda(uv)}.
$$
Since $\delta(\lambda_1\lambda_2)=\delta\lambda_1\delta\lambda_2$, $B^2(T,\mathbb{F}^\times):=\{\delta\lambda\mid\lambda:T\to\mathbb{F}^\times\}$ is a subgroup of $Z^2(T,\mathbb{F}^\times)$. We denote the quotient by $H^2(T,\mathbb{F}^\times)=Z^2(T,\mathbb{F}^\times)/B^2(T,\mathbb{F}^\times)$, and call it the \emph{second cohomology group of $T$}. Given $\sigma\in Z^2(T,\mathbb{F}^\times)$, we denote by $[\sigma]$ the element $\sigma B^2(T,\mathbb{F}^\times)$ in $H^2(T,\mathbb{F}^\times)$.
\begin{Lemma}[{\cite[Chapter 2, Lemma 1.1]{Kap1}}]
Let $\sigma_1,\sigma_2\in Z^2(T,\mathbb{F}^\times)$. Then $\mathbb{F}^{\sigma_1}T\cong\mathbb{F}^{\sigma_2}T$ if and only if $[\sigma_1]=[\sigma_2]$.\qed
\end{Lemma}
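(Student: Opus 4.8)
The plan is to prove both implications directly and explicitly, exploiting the fact that each $\mathbb{F}^{\sigma_i}T$ is naturally $T$-graded with one-dimensional homogeneous components $(\mathbb{F}^{\sigma_i}T)_u=\mathbb{F}X_u$, so that a degree-preserving isomorphism is forced to act diagonally on the distinguished bases. Throughout I write $\{X_u\mid u\in T\}$ for the canonical basis of $\mathbb{F}^{\sigma_1}T$ and $\{Y_u\mid u\in T\}$ for that of $\mathbb{F}^{\sigma_2}T$, so that $X_uX_v=\sigma_1(u,v)X_{uv}$ and $Y_uY_v=\sigma_2(u,v)Y_{uv}$.

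For the implication $[\sigma_1]=[\sigma_2]\Rightarrow\mathbb{F}^{\sigma_1}T\cong\mathbb{F}^{\sigma_2}T$, I would start from the hypothesis that $\sigma_1$ and $\sigma_2$ differ by a coboundary, i.e.\ there is a map $\lambda:T\to\mathbb{F}^\times$ with $\sigma_1(u,v)=\delta\lambda(u,v)\,\sigma_2(u,v)$ for all $u,v\in T$. I then define $\phi:\mathbb{F}^{\sigma_1}T\to\mathbb{F}^{\sigma_2}T$ on the basis by $\phi(X_u)=\lambda(u)Y_u$ and extend linearly. A short computation gives $\phi(X_uX_v)=\sigma_1(u,v)\lambda(uv)Y_{uv}$ and $\phi(X_u)\phi(X_v)=\lambda(u)\lambda(v)\sigma_2(u,v)Y_{uv}$, and these agree precisely because $\sigma_1(u,v)/\sigma_2(u,v)=\lambda(u)\lambda(v)/\lambda(uv)=\delta\lambda(u,v)$; hence $\phi$ is an algebra homomorphism. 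Since $\lambda$ takes values in $\mathbb{F}^\times$, the map $\phi$ sends a basis to a family of nonzero scalar multiples of basis vectors, so it is bijective, and it visibly preserves the $T$-grading. Thus $\phi$ is a graded isomorphism.

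For the converse I would take a graded isomorphism $\phi:\mathbb{F}^{\sigma_1}T\to\mathbb{F}^{\sigma_2}T$ and run the computation backwards. Because $\phi$ preserves degrees and each homogeneous component is a line, we have $\phi(X_u)\in(\mathbb{F}^{\sigma_2}T)_u=\mathbb{F}Y_u$, say $\phi(X_u)=\lambda(u)Y_u$; bijectivity forces $\lambda(u)\neq0$, so $\lambda:T\to\mathbb{F}^\times$. Applying $\phi$ to $X_uX_v=\sigma_1(u,v)X_{uv}$ and comparing with $\phi(X_u)\phi(X_v)$ exactly as above yields $\sigma_1(u,v)\lambda(uv)=\lambda(u)\lambda(v)\sigma_2(u,v)$, that is $\sigma_1=\delta\lambda\cdot\sigma_2$, whence $[\sigma_1]=[\sigma_2]$.

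The one genuinely delicate point, which I would make explicit, is the role of the grading in the converse: a bare $\mathbb{F}$-algebra isomorphism need not respect the $T$-grading, and in fact the statement fails for non-graded isomorphisms (for instance, over $\mathbb{C}$ the two distinct nontrivial classes on $\mathbb{Z}/3\times\mathbb{Z}/3$ both realise $M_3(\mathbb{C})$, yet are cohomologically inequivalent). Thus the crux is to guarantee that the isomorphism carries $\mathbb{F}X_u$ onto $\mathbb{F}Y_u$; in the present setting this is automatic, since we work with graded isomorphisms and the homogeneous components are one-dimensional, so the diagonal form $\phi(X_u)=\lambda(u)Y_u$ is forced and the cohomological computation closes the argument.
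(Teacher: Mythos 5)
The paper offers no proof of this lemma; it is quoted from Karpilovsky with a \qed, so there is nothing to compare against. Your argument is correct and is the standard one: both computations (that $\phi(X_u)=\lambda(u)Y_u$ is multiplicative precisely when $\sigma_1=\delta\lambda\cdot\sigma_2$, and conversely that a graded isomorphism is forced into this diagonal form by the one-dimensionality of the homogeneous components) check out. You are also right to flag that the statement is only true for \emph{graded} isomorphisms, and your $M_3(\mathbb{C})$ counterexample for plain algebra isomorphisms is apt; this reading of $\cong$ is consistent with the paper's convention that ``isomorphic'' for graded algebras means graded-isomorphic, and with the notion of equivalence used in the cited source.
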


The following is an easy manipulation:
\begin{Lemma}
Let $[\sigma]\in H^2(T,\mathbb{F}^\times)$. Then, there exists $\sigma'\in[\sigma]$ such that $\sigma'(u,1)=\sigma'(1,u)=1$, for all $u\in T$.\qed
\end{Lemma}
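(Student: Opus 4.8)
The plan is to show first that the $2$-cocycle identity already forces $\sigma(u,1)$ and $\sigma(1,u)$ to be constant in $u$, and then to remove that constant by a single coboundary. Since multiplying $\sigma$ by a coboundary $\delta\lambda$ does not change its class $[\sigma]$, the resulting cocycle $\sigma'$ will automatically lie in $[\sigma]$, so it suffices to arrange the normalization.

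First I would extract the relevant values from the defining identity
$$\sigma(u,v)\sigma(uv,w)=\sigma(u,vw)\sigma(v,w),\qquad u,v,w\in T.$$
Putting $v=w=1$ gives $\sigma(u,1)^2=\sigma(u,1)\sigma(1,1)$; as every value of $\sigma$ lies in $\mathbb{F}^\times$, cancelling $\sigma(u,1)$ yields $\sigma(u,1)=\sigma(1,1)$ for all $u\in T$. Symmetrically, putting $u=v=1$ gives $\sigma(1,1)\sigma(1,w)=\sigma(1,w)^2$, whence $\sigma(1,w)=\sigma(1,1)$ for all $w\in T$. Writing $c:=\sigma(1,1)\in\mathbb{F}^\times$, we therefore have $\sigma(u,1)=\sigma(1,u)=c$ for every $u\in T$.

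Next I would choose the constant map $\lambda:T\to\mathbb{F}^\times$, $\lambda\equiv c^{-1}$. Its coboundary is $\delta\lambda(u,v)=\lambda(u)\lambda(v)/\lambda(uv)=c^{-1}$ for all $u,v$, so $\delta\lambda\in B^2(T,\mathbb{F}^\times)$ is the constant function $c^{-1}$. Setting $\sigma':=\sigma\cdot\delta\lambda$, we have $[\sigma']=[\sigma]$ and $\sigma'(u,v)=c^{-1}\sigma(u,v)$; in particular $\sigma'(u,1)=c^{-1}\sigma(u,1)=c^{-1}c=1$ and likewise $\sigma'(1,u)=1$ for all $u\in T$, as desired.

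The argument is entirely elementary, so there is no serious obstacle; the only point requiring care is the correct specialization of the cocycle identity, since a wrong substitution (for instance, failing to use $1\cdot u=u\cdot 1=u$) would not produce the cancellation that pins $\sigma(u,1)$ and $\sigma(1,u)$ to the single scalar $c=\sigma(1,1)$. Once those two evaluations are identified, the normalizing coboundary is forced and the verification is immediate.
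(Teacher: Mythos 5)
Your proof is correct and is exactly the standard normalization argument that the paper leaves to the reader (the lemma is stated as ``an easy manipulation'' with no written proof). The specializations $v=w=1$ and $u=v=1$ of the cocycle identity correctly force $\sigma(u,1)=\sigma(1,u)=\sigma(1,1)$ for all $u$, and multiplying by the constant coboundary $\delta\lambda\equiv\sigma(1,1)^{-1}$ then normalizes $\sigma$ within its cohomology class.
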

Hence, combining the two previous result, given $\mathbb{F}^\sigma T$, we can assume that $\sigma(u,1)=\sigma(1,u)=1$, for all $u\in T$.

Finally, it is worth mentioning that, if $\mathrm{char}\,\mathbb{F}$ does not divide $|T|$, then $\mathbb{F}^\sigma T$ is semiprimitive (that is, its Jacobson radical is zero).

\subsection{Graded division algebras}
Graded division algebras have a nice description when the base field is algebraically closed. Assume that $\mathbb{F}$ is algebraically closed and let $\mathcal{D}=\bigoplus_{g\in G}\mathcal{D}_g$ be a finite-dimensional graded division algebra over $\mathbb{F}$. Let $T=\{g\in G\mid\mathcal{D}_g\ne0\}$ be its support. Then it is easy to see that $T$ is a subgroup of $G$. We use multiplicative notation for the product of $T$, and denote by $1$ its neutral element.

Moreover, $\mathcal{D}_1\supseteq\mathbb{F}$ is a division algebra. So $\mathcal{D}_1=\mathbb{F}$, since $\mathbb{F}$ is algebraically closed and $\dim_\mathbb{F}\mathcal{D}_1<\infty$. This also implies $\dim\mathcal{D}_g=1$, for all $g\in T$. Let $\{X_u\mid u\in T\}$ be a homogeneous basis of $\mathcal{D}$. Then $X_uX_v=\sigma(u,v)X_{uv}$, for some $\sigma(u,v)\in\mathbb{F}^\times$. Since $\mathcal{D}$ is associative, from $(X_uX_v)X_w=X_u(X_vX_w)$, we derive that $\sigma$ is a 2-cocycle. Hence, $\mathcal{D}\cong\mathbb{F}^\sigma T$, the twisted group algebra of $T$ by $\sigma$. Conversely, for any finite group $T$ and any $\sigma\in Z^2(T,\mathbb{F}^\times)$, the natural $T$-grading on $\mathbb{F}^\sigma T$ turns it into a graded division algebra.

Now, assume that $T$ is abelian. Let $\beta(u,v)=\sigma(u,v)\sigma(v,u)^{-1}$. A direct computation shows that $\beta$ is an alternating bicharacter; moreover, $\mathcal{D}$ is central if and only if $\beta$ is nondegenerate. Finally, Theorem 2.15 of \cite{EK2013} tells that the pair $(T,\beta)$ uniquely determines an isomorphism class of finite-dimensional central graded division algebras over $\mathbb{F}$ with commutative support. Hence, if $T$ is abelian, the pairs $(T,\beta)$ are in bijection with the elements of the second cohomology group $H^2(T,\mathbb{F}^\times)$.

\subsection{Realization of graded division algebras with commutative support\label{abelian_graded_division_algebra}}
Let $\varepsilon$ be a primitive $n$-root of unity. Consider the elements
\begin{equation}\label{epsilongrad}
X=\left(\begin{array}{cccccc}
0&1&0&\cdots&0&0\\%
0&0&1&\cdots&0&0\\%
\vdots&\vdots&\vdots& &\vdots&\vdots\\%
0&0&0&\cdots&0&1\\%
1&0&0&\cdots&0&0\end{array}\right),\quad
Y=\left(\begin{array}{ccccc}%
\varepsilon^{n-1}& & & &0\\%
&\varepsilon^{n-2}\\%
&&\ddots\\%
&&&\varepsilon\\%
0&&&&1%
\end{array}\right).
\end{equation}
Note that $\varepsilon XY=YX$ and $X^n=Y^n=1$. Moreover, $\{X^iY^j\mid i,j=0,1,\ldots,n\}$ is a vector space basis of $M_n(\mathbb{F})$. Also, $\mathcal{A}_{(i,j)}=\mathrm{Span}\{X^iY^j\}$ constitute a $\mathbb{Z}_n\times\mathbb{Z}_n$-grading on $M_n(\mathbb{F})$. This grading is called $\varepsilon$-grading, and it is a division grading.

Now, if $M_n(\mathbb{F})$ is endowed with a division grading, then, as mentioned in the previous section, the support $T$ of the grading is a group, and the product is determined by a non-degenerate alternating bicharacter $\beta:T\times T\to\mathbb{F}^\times$. Thus, we obtain a decomposition $T=H_1^2\times H_2^2\times\cdots\times H_s^2$, where each $H_i$ is $\beta$-invariant and $H_i\cong\mathbb{Z}_{m_i}$. Moreover, we obtain
\begin{equation}\label{kronecker}
M_n=M_{m_1}\otimes M_{m_2}\otimes\cdots\otimes M_{m_s},
\end{equation}
where $\mathrm{Supp}\,M_{m_i}=H_i^2$ and $M_{m_i}$ has an $\varepsilon_i$-grading (see \cite[Section 2.2]{EK2013} for more details).

Thus, if $\mathcal{D}$ is a central finite-dimensional graded division algebra over an algebraically closed field $\mathbb{F}$, then we can realize $\mathcal{D}$ as a matrix algebra. Such realization is made after a choice of Kronecker product identification as in \eqref{kronecker}, and, for each $M_{m_i}$, a choice of a basis as in \eqref{epsilongrad}.

\section{Degree-inverting involution on graded division algebras\label{grad_div_alg}}
As mentioned above, over an algebraically closed field $\mathbb{F}$, a finite-dimensional $G$-graded-division algebra assumes the form $\mathbb{F}^\sigma T$, where $T\subseteq G$ is a finite subgroup, and $\sigma:T\times T\to\mathbb{F}^\times$ is a 2-cocycle.

\begin{Lemma}\label{llem1}
Given $\sigma\in Z^2(T,\mathbb{F}^\times)$, let $\bar\sigma:T\times T\to\mathbb{F}^\times$ be defined by $\bar{\sigma}(u,v)=\sigma(v^{-1},u^{-1})$. Then $[\bar\sigma]=[\sigma^{-1}]$.
\end{Lemma}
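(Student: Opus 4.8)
The plan is to exhibit an explicit coboundary realizing the claimed equality. Since $H^2(T,\mathbb{F}^\times)$ is abelian and $[\sigma^{-1}]=[\sigma]^{-1}$, the assertion $[\bar\sigma]=[\sigma^{-1}]$ is equivalent to $[\bar\sigma\,\sigma]=[1]$, i.e.\ to $\bar\sigma\cdot\sigma\in B^2(T,\mathbb{F}^\times)$. So I would reduce everything to producing a map $\lambda:T\to\mathbb{F}^\times$ with $\bar\sigma(u,v)\sigma(u,v)=\lambda(u)\lambda(v)/\lambda(uv)=\delta\lambda(u,v)$. By the normalization result recalled above, I may first replace $\sigma$ by a cohomologous cocycle satisfying $\sigma(u,1)=\sigma(1,u)=1$ for all $u\in T$; this changes neither $[\sigma]$ nor $[\bar\sigma]$, and it lets me work inside $\mathbb{F}^\sigma T$ with $X_1=1$. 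My candidate is $\lambda(u)=\sigma(u,u^{-1})$.

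The crux is the identity
\[
\sigma(v^{-1},u^{-1})=\frac{\sigma(u,u^{-1})\,\sigma(v,v^{-1})}{\sigma(u,v)\,\sigma\bigl(uv,(uv)^{-1}\bigr)},\qquad u,v\in T.
\]
I would establish it by computing inside the graded division algebra $\mathbb{F}^\sigma T$. With the normalization, each homogeneous $X_u$ is invertible and $X_u^{-1}=\sigma(u,u^{-1})^{-1}X_{u^{-1}}$; here I need the auxiliary fact $\sigma(w,w^{-1})=\sigma(w^{-1},w)$, which drops out of the cocycle condition evaluated at $(w,w^{-1},w)$ together with the normalization, to guarantee that this is a two-sided inverse. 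The idea is then to evaluate $X_{v^{-1}}X_{u^{-1}}$ in two ways: directly it equals $\sigma(v^{-1},u^{-1})X_{(uv)^{-1}}$, while substituting $X_{w^{-1}}=\sigma(w,w^{-1})X_w^{-1}$ and using $X_v^{-1}X_u^{-1}=(X_uX_v)^{-1}=\sigma(u,v)^{-1}X_{uv}^{-1}$ produces the right-hand side above after expanding $X_{uv}^{-1}$. Comparing the coefficients of $X_{(uv)^{-1}}$ yields the displayed identity. The only real obstacle is this bookkeeping of inverses and the consistent use of normalization; everything else is formal.

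Finally I would conclude: multiplying the identity by $\sigma(u,v)$ gives exactly $\bar\sigma(u,v)\sigma(u,v)=\delta\lambda(u,v)$ with $\lambda(w)=\sigma(w,w^{-1})$, so $\bar\sigma\cdot\sigma\in B^2(T,\mathbb{F}^\times)$ and hence $[\bar\sigma]=[\sigma^{-1}]$. As a bonus, this also shows $\bar\sigma$ is a genuine $2$-cocycle without a separate verification, since $\bar\sigma=\sigma^{-1}\cdot\delta\lambda$ is a product of elements of $Z^2(T,\mathbb{F}^\times)$. (Conceptually the same fact can be seen by identifying $(\mathbb{F}^\sigma T)^{\mathrm{op}}$ with $\mathbb{F}^{\bar\sigma}T$ via $X_u\mapsto X_{u^{-1}}$ and invoking the isomorphism criterion $\mathbb{F}^{\sigma_1}T\cong\mathbb{F}^{\sigma_2}T\iff[\sigma_1]=[\sigma_2]$, but the explicit coboundary above is the most direct route and avoids tracking the regrading by $u\mapsto u^{-1}$.)
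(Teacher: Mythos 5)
Your proof is correct and lands on exactly the same coboundary as the paper's: both arguments reduce to showing $\sigma(u,v)\,\bar\sigma(u,v)=\delta\lambda(u,v)$ with $\lambda(w)=\sigma(w,w^{-1})$. The difference is only in how that identity is verified. The paper applies the cocycle condition twice, directly to the symbols $\sigma(\cdot,\cdot)$, with no normalization; you instead normalize $\sigma$ first and then compute $X_{v^{-1}}X_{u^{-1}}$ in two ways inside $\mathbb{F}^\sigma T$, using $X_w^{-1}=\sigma(w,w^{-1})^{-1}X_{w^{-1}}$ and $(X_uX_v)^{-1}=\sigma(u,v)^{-1}X_{uv}^{-1}$. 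Your route costs the preliminary normalization together with the (asserted but easily supplied) check that it changes neither class: if $\sigma'=\sigma\,\delta\mu$ then $\bar{\sigma'}=\bar\sigma\,\delta\nu$ with $\nu(u)=\mu(u^{-1})$, so $[\bar{\sigma'}]=[\bar\sigma]$. In exchange it makes the bookkeeping essentially automatic (the algebra keeps track of the cocycle identity for you) and, as you observe, yields for free that $\bar\sigma$ is itself a $2$-cocycle. Both arguments are complete; yours is slightly longer but arguably more transparent.
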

\begin{proof}
We have
\begin{align*}
\sigma(u,v)\bar{\sigma}(u,v)&=\sigma(u,v)\sigma(v^{-1},u^{-1})\\%
&=\sigma(uv,v^{-1})^{-1}\sigma(v,v^{-1})\sigma(u,vv^{-1})\sigma(v^{-1},u^{-1}).
\end{align*}
Also,
\begin{align*}
\sigma(uv,v^{-1})=\sigma(uvv^{-1},u^{-1})^{-1}\sigma(v^{-1},u^{-1})\sigma(uv,v^{-1}u^{-1}).
\end{align*}
Thus, continuing from the first equation,
\begin{align*}
\sigma(u,v)\bar{\sigma(u,v)}&=\sigma(u,u^{-1})\sigma(v,v^{-1})\sigma(uv,(uv)^{-1})^{-1}\\%
&=\delta\lambda(u,v),
\end{align*}
where $\lambda(u):=\sigma(u,u^{-1})$.
\end{proof}

We fix a $\sigma\in Z^2(T,\mathbb{F}^\times)$, and a homogeneous basis $\{X_u\mid u\in T\}$ of $\mathbb{F}^\sigma T$.
\begin{Prop}
$\mathbb{F}^\sigma T$ admits a degree-inverting involution if and only if $[\sigma]^2=1$.
\end{Prop}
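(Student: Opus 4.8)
The plan is to reduce the existence of a degree-inverting involution to a cohomological condition on $\sigma$, using the one-dimensionality of the homogeneous components of $\mathbb{F}^\sigma T$. Since an involution here is of the first kind, it is in particular $\mathbb{F}$-linear; being degree-inverting, it sends $\mathcal{D}_u=\mathbb{F}X_u$ into $\mathcal{D}_{u^{-1}}=\mathbb{F}X_{u^{-1}}$, and since $\psi$ is bijective we must have $\psi(X_u)=\mu(u)X_{u^{-1}}$ for some function $\mu\colon T\to\mathbb{F}^\times$. Conversely, any such $\mu$ determines an $\mathbb{F}$-linear degree-inverting map $\psi$, so the task becomes to translate the two axioms of an involution --- that $\psi$ be anti-multiplicative and that $\psi^2=\mathrm{id}$ --- into conditions on $\mu$.

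First I would impose anti-multiplicativity. Computing $\psi(X_uX_v)$ and $\psi(X_v)\psi(X_u)$ directly from $X_uX_v=\sigma(u,v)X_{uv}$ and the rule $\psi(X_u)=\mu(u)X_{u^{-1}}$, the identity $\psi(X_uX_v)=\psi(X_v)\psi(X_u)$ becomes $\sigma(u,v)\mu(uv)=\mu(u)\mu(v)\,\sigma(v^{-1},u^{-1})$, that is,
\[
\delta\mu(u,v)=\frac{\sigma(u,v)}{\bar\sigma(u,v)},
\]
with $\bar\sigma$ as in Lemma \ref{llem1}. Hence a function $\mu$ making $\psi$ anti-multiplicative exists if and only if the cocycle $\sigma\bar\sigma^{-1}$ is a coboundary, i.e.\ $[\sigma]=[\bar\sigma]$. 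By Lemma \ref{llem1}, $[\bar\sigma]=[\sigma^{-1}]=[\sigma]^{-1}$, so this is exactly the condition $[\sigma]^2=1$. This already yields the forward implication, and produces a candidate $\psi$ whenever $[\sigma]^2=1$.

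It remains to check that such a candidate is genuinely an involution, i.e.\ that $\psi^2=\mathrm{id}$, which amounts to $\mu(u)\mu(u^{-1})=1$ for all $u\in T$. This is the step I expect to be the main obstacle, since $\mu$ is only determined up to a character of $T$ and one cannot simply adjust it by hand to force the symmetry. The resolution is that the involution property is already forced by the anti-multiplicative one: evaluating $\delta\mu=\sigma\bar\sigma^{-1}$ at $(1,1)$ gives $\mu(1)=1$ (using $\bar\sigma(1,1)=\sigma(1,1)$), while evaluating it at $(u,u^{-1})$ gives $\mu(u)\mu(u^{-1})=\sigma(u,u^{-1})/\bar\sigma(u,u^{-1})$. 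Since $\bar\sigma(u,u^{-1})=\sigma(u,u^{-1})$ straight from the definition of $\bar\sigma$, the right-hand side equals $1$, so $\mu(u)\mu(u^{-1})=1$ and $\psi^2=\mathrm{id}$ automatically.

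Assembling the two directions gives the stated equivalence. To match our convention I would finally remark that the $\psi$ constructed above is a first-kind involution, since it is $\mathbb{F}$-linear by construction and restricts to the identity on the (homogeneous) center of $\mathbb{F}^\sigma T$.
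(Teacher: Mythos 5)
Your proposal is correct and follows essentially the same route as the paper: write $\psi(X_u)=\mu(u)X_{u^{-1}}$, observe that anti-multiplicativity is equivalent to $\delta\mu=\sigma\bar\sigma^{-1}$ (hence to $[\sigma]=[\bar\sigma]=[\sigma]^{-1}$ via Lemma \ref{llem1}), and then deduce $\mu(1)=1$ and $\mu(u)\mu(u^{-1})=\sigma(u,u^{-1})\bar\sigma(u,u^{-1})^{-1}=1$ so that $\psi^2=\mathrm{id}$ comes for free. The only cosmetic differences are that you package both implications into a single equivalence and add the (harmless, correct) remark that the resulting map is of the first kind.
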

\begin{proof}
Assume that $\rho$ is a degree-inverting involution on $\mathbb{F}^\sigma T$. Let $\mu:T\to\mathbb{F}^\times$ be such that $\rho(X_u)=\mu(u)X_{u^{-1}}$, for all $u\in T$. Note that, for any $u,v\in T$,
\begin{align*}
&\rho(X_uX_v)=\rho(X_v)\rho(X_u)=\mu(u)\mu(v)\sigma(v^{-1},u^{-1})X_{v^{-1}u^{-1}},\\%
&\rho(X_uX_v)=\sigma(u,v)\rho(X_{uv})=\sigma(u,v)\mu(uv)X_{(uv)^{-1}}.
\end{align*}
Thus $\sigma=(\delta\mu)\bar{\sigma}$, which implies $[\sigma]^2=1$, by Lemma \ref{llem1}.

Conversely, if $[\sigma]=[\sigma^{-1}]=[\bar{\sigma}]$, let $\mu:T\to\mathbb{F}^\times$ be such that $\sigma=(\delta\mu)\bar{\sigma}$. We claim that $\rho:\mathbb{F}^\sigma T\to\mathbb{F}^\sigma T$ defined by $\rho(X_u)=\mu(u)X_{u^{-1}}$ is a degree-inverting involution. By definition, $\rho$ inverts the degrees, so we only need to show that it is an involution. We have
\begin{align*}
&\rho(X_uX_v)=\sigma(u,v)\mu(uv)X_{(uv)^{-1}},\\%
&\rho(X_v)\rho(X_u)=\mu(v)\mu(u)\sigma(v^{-1},u^{-1})X_{v^{-1}u^{-1}},
\end{align*}
and both coincide by the choice of $\mu$. Finally,
$$
\rho\rho(X_u)=\mu(u)\mu(u^{-1})X_u.
$$
So, we need to show that $\mu(u)\mu(u^{-1})=1$, for all $u\in T$. However, we note that, for any $u,v\in T$, we have
$$
\frac{\mu(u)\mu(v)}{\mu(uv)}=\frac{\sigma(u,v)}{\bar{\sigma}(u,v)}.
$$
In particular, $\mu(u)\mu(u^{-1})=\sigma(u,u^{-1})\bar{\sigma}(u,u^{-1})^{-1}\mu(uu^{-1})=\mu(1)$, for any $u\in T$. Taking $u=1$, we obtain $\mu(1)=1$. Hence, $\mu(u)\mu(u^{-1})=1$, for any $u\in T$, and we are done.
\end{proof}

\begin{Lemma}
There exists an isomorphism $\mathrm{Aut}_G(\mathbb{F}^\sigma T)\cong\mathrm{Hom}(T,\mathbb{F}^\times)$.
\end{Lemma}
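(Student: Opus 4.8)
The plan is to exploit the fact that each homogeneous component of $\mathbb{F}^\sigma T$ is one-dimensional. A $G$-graded automorphism $\phi$ must satisfy $\phi\bigl((\mathbb{F}^\sigma T)_u\bigr)\subseteq(\mathbb{F}^\sigma T)_u$ for every $u\in T$; since $(\mathbb{F}^\sigma T)_u=\mathrm{Span}\{X_u\}$ is one-dimensional (by construction of $\mathbb{F}^\sigma T$), there is a scalar $\chi(u)\in\mathbb{F}$ with $\phi(X_u)=\chi(u)X_u$. Because $\phi$ is bijective and sends invertible homogeneous elements to invertible homogeneous elements, each $\chi(u)$ lies in $\mathbb{F}^\times$. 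Thus every graded automorphism determines a map $\chi\colon T\to\mathbb{F}^\times$, and conversely any such map determines a graded linear bijection $\phi_\chi$ defined by $\phi_\chi(X_u)=\chi(u)X_u$.

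Next I would translate the requirement that $\phi$ be an algebra homomorphism into a condition on $\chi$. Using $X_uX_v=\sigma(u,v)X_{uv}$, on one hand $\phi(X_uX_v)=\sigma(u,v)\chi(uv)X_{uv}$, while on the other hand $\phi(X_u)\phi(X_v)=\chi(u)\chi(v)\sigma(u,v)X_{uv}$. Comparing and cancelling the common nonzero factor $\sigma(u,v)$, these agree for all $u,v\in T$ if and only if $\chi(uv)=\chi(u)\chi(v)$, that is, $\chi\in\mathrm{Hom}(T,\mathbb{F}^\times)$. The cocycle disappears entirely, which is why the answer is independent of $[\sigma]$.

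Finally, I would verify that $\chi\mapsto\phi_\chi$ is a group isomorphism. Bijectivity of the correspondence is immediate from the discussion above, since one recovers $\chi$ from $\phi$ by reading off its action on the basis. For the homomorphism property, a short computation gives $(\phi_\chi\circ\phi_{\chi'})(X_u)=\chi(u)\chi'(u)X_u=\phi_{\chi\chi'}(X_u)$, so composition of graded automorphisms corresponds to the pointwise product of characters, which is precisely the group operation in $\mathrm{Hom}(T,\mathbb{F}^\times)$. I do not anticipate a genuine obstacle: the whole argument rests on the one-dimensionality of the homogeneous components and on the cancellation of $\sigma$. The only point deserving a moment's care is that $\phi_\chi$ is invertible for arbitrary $\chi\in\mathrm{Hom}(T,\mathbb{F}^\times)$, but $\phi_{\chi^{-1}}$ is a two-sided inverse, so this is automatic.
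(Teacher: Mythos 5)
Your proposal is correct and follows essentially the same route as the paper: read off a scalar $\chi(u)$ from the action on the one-dimensional homogeneous components, observe that the cocycle cancels so that the homomorphism condition on $\phi$ is exactly multiplicativity of $\chi$, and check that composition corresponds to pointwise product. The paper's own proof is just a terser version of this argument, leaving the cancellation of $\sigma$ as "easy to check."
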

\begin{proof}
Given $\psi\in\mathrm{Aut}_G(\mathbb{F}^\sigma T)$, we have $\psi(X_u)=\chi(u)X_u$, for some $\chi:T\to\mathbb{F}^\times$, for all $u\in T$. It is easy to check that $\chi$ is a group homomorphism. Conversely, given $\chi:T\to\mathbb{F}^\times$, the map $\psi$ defined by $\psi(X_u)=\chi(u)X_u$ is a $G$-graded automorphism of $\mathbb{F}^\sigma T$. So, we obtain a bijection $\psi\mapsto\chi$.

Finally, note that, if $\psi_i\mapsto\chi_i$, for $i=1,2$, then $\psi_1\psi_2\mapsto\chi_1\chi_2$. So, the bijection is a group isomorphism.
\end{proof}
Denote $\hat T=\mathrm{Hom}(T,\mathbb{F}^\times)$. As a consequence of the previous lemma, $\mathrm{Aut}_G(\mathbb{F}^\sigma T)\cong\hat T$ is an abelian group.

\begin{Lemma}
Let $\rho$ be a degree-inverting involution on $\mathbb{F}^\sigma T$. Then, for any $\psi\in\mathrm{Aut}_G(\mathbb{F}^\sigma T)$, $\rho\circ\psi$ is a degree-inverting involution on $\mathbb{F}^\sigma T$. Every degree-inverting involution is obtained by such way.
\end{Lemma}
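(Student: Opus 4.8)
The plan is to establish the two assertions separately, the underlying principle being that the collection of degree-inverting involutions on $\mathbb{F}^\sigma T$ forms a torsor under the abelian group $\mathrm{Aut}_G(\mathbb{F}^\sigma T)\cong\hat T$ identified in the previous lemma. The first assertion says that acting by $\mathrm{Aut}_G$ preserves this collection, and the second says that the action is transitive.

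For the first assertion, I would start from the formal properties of the maps. Since $\rho$ is an involution it is in particular an anti-automorphism, while $\psi$ is an algebra automorphism; hence the composite $\rho\circ\psi$ is again an anti-automorphism of $\mathbb{F}^\sigma T$. Moreover $\psi$ preserves degrees and $\rho$ inverts them, so $\rho\circ\psi$ carries $\mathcal{A}_g$ into $\mathcal{A}_{g^{-1}}$, i.e.\ it is degree-inverting. The only genuine point left is that $\rho\circ\psi$ squares to the identity.

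For this step I would pass to the homogeneous basis. Write $\rho(X_u)=\mu(u)X_{u^{-1}}$ (possible since $\dim\mathcal{D}_{u^{-1}}=1$) and, by the previous lemma, $\psi(X_u)=\chi(u)X_u$ with $\chi\in\hat T$. Then $(\rho\circ\psi)(X_u)=\chi(u)\mu(u)X_{u^{-1}}$, and applying the map once more gives $(\rho\circ\psi)^2(X_u)=\chi(u)\chi(u^{-1})\mu(u)\mu(u^{-1})X_u$. Here $\mu(u)\mu(u^{-1})=1$ because $\rho^2=\mathrm{id}$ already forces $\rho^2(X_u)=\mu(u)\mu(u^{-1})X_u=X_u$, and $\chi(u)\chi(u^{-1})=\chi(1)=1$ because $\chi$ is a group homomorphism. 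Hence $(\rho\circ\psi)^2=\mathrm{id}$ on the basis, and by linearity everywhere. This is the crux of the statement, though a mild one: the involutivity of $\rho\circ\psi$ rests exactly on combining the involutivity of $\rho$ with the multiplicativity of $\chi$, and it would fail for a general degree-inverting anti-automorphism.

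For the second assertion, let $\rho'$ be an arbitrary degree-inverting involution and set $\psi:=\rho\circ\rho'$. As $\rho$ and $\rho'$ are both anti-automorphisms, their composite $\psi$ is an algebra automorphism; as both invert degrees, $\psi$ preserves them, so $\psi\in\mathrm{Aut}_G(\mathbb{F}^\sigma T)$. Finally, using $\rho^2=\mathrm{id}$ we obtain $\rho\circ\psi=\rho\circ\rho\circ\rho'=\rho'$, exhibiting $\rho'$ in the required form. I do not expect a real obstacle here; this is the standard ``the product of two involutions is an automorphism'' argument, and the single point needing care is the verification that $\psi$ is genuinely graded, which is immediate from the degree-inverting property of $\rho$ and $\rho'$.
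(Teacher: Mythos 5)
Your proof is correct and follows essentially the same route as the paper's: the paper likewise verifies the first assertion by the direct computation $\psi(X_u)=\chi(u)X_u$, $\rho(X_u)=\mu(u)X_{u^{-1}}$ (which you spell out in full, including the use of $\chi(u)\chi(u^{-1})=1$ and $\mu(u)\mu(u^{-1})=1$), and obtains the second by observing that $\rho\rho'$ is a graded automorphism and applying $\rho^2=\mathrm{id}$. No discrepancies.
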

\begin{proof}
Using that $\psi(X_u)=\chi(u)X_u$, for all $u\in T$, we obtain that $\rho\circ\psi$ is a degree-inverting involution by direct computation. If $\rho'$ is another degree-inverting involution, then $\rho\rho'$ is a graded automorphism, thus $\rho\rho'=\psi$, for some $\psi\in\mathrm{Aut}_G(\mathbb{F}^\sigma T)$. Thus, $\rho'=\rho\circ\psi$.
\end{proof}

Given a group $H$, we denote $S(H)=\{h^2\mid h\in H\}$. Notice that, if $H$ is abelian, then $S(H)$ is a subgroup of $H$.
\begin{Lemma}
$\rho$ and $\rho\circ\psi$ are equivalent if and only if $\psi\in S(\mathrm{Aut}_G(\mathbb{F}^\sigma T))$.
\end{Lemma}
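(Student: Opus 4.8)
The plan is to use the standard notion of equivalence of involutions: two involutions are equivalent when they are conjugate by a graded automorphism. Thus $\rho$ and $\rho\circ\psi$ are equivalent precisely when there exists $\theta\in\mathrm{Aut}_G(\mathbb{F}^\sigma T)$ with $\rho\circ\psi=\theta\circ\rho\circ\theta^{-1}$, and the whole argument reduces to computing this conjugation explicitly in terms of the character parametrization of the graded automorphisms furnished by the isomorphism $\mathrm{Aut}_G(\mathbb{F}^\sigma T)\cong\hat T$.

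First I would fix the data. Write $\rho(X_u)=\mu(u)X_{u^{-1}}$, and write $\psi(X_u)=\eta(u)X_u$ and $\theta(X_u)=\chi(u)X_u$ for characters $\eta,\chi\in\hat T$. The key computation is the conjugate $\theta\circ\rho\circ\theta^{-1}$. Applying the three maps in turn and using that $\chi$ is a homomorphism (so that $\chi(u^{-1})=\chi(u)^{-1}$), one obtains
\[
(\theta\circ\rho\circ\theta^{-1})(X_u)=\chi(u)^{-2}\mu(u)X_{u^{-1}}.
\]
Here the exponent $-2$ is the heart of the matter: one factor $\chi(u)^{-1}$ comes from $\theta^{-1}$, and a second factor $\chi(u^{-1})=\chi(u)^{-1}$ comes from $\theta$ acting on the degree-inverted element $X_{u^{-1}}$. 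It is exactly the degree-inverting behaviour of $\rho$ that forces the two contributions to have the same sign and thereby produces a square.

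Comparing this with $(\rho\circ\psi)(X_u)=\eta(u)\mu(u)X_{u^{-1}}$, the identity $\rho\circ\psi=\theta\circ\rho\circ\theta^{-1}$ holds if and only if $\eta(u)=\chi(u)^{-2}$ for every $u\in T$, that is, $\eta=(\chi^{-1})^2$. Therefore such a $\theta$ exists if and only if $\eta$ is a square in $\hat T$; conversely, if $\eta=\kappa^2$ for some $\kappa\in\hat T$, one simply takes $\chi=\kappa^{-1}$. Transporting this back through $\mathrm{Aut}_G(\mathbb{F}^\sigma T)\cong\hat T$ shows that $\rho$ and $\rho\circ\psi$ are equivalent exactly when $\psi\in S(\mathrm{Aut}_G(\mathbb{F}^\sigma T))$, as claimed.

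I expect the only genuine subtlety to be fixing the correct definition of equivalence and tracking the two sources of the factor $\chi(u)^{-1}$; once the conjugation formula above is in hand, the remainder is the routine observation that the set of achievable $\eta$ is precisely the group of squares $S(\hat T)$.
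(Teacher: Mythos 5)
Your proof is correct and follows essentially the same route as the paper: the paper packages your explicit character computation into the single commutation relation $\rho\circ\psi=\psi^{-1}\circ\rho$, from which conjugation gives $\varphi^{-1}\rho\varphi=\rho\varphi^{2}$, exactly matching your formula $(\theta\rho\theta^{-1})(X_u)=\chi(u)^{-2}\mu(u)X_{u^{-1}}$. Both arguments then conclude identically by identifying the achievable twists with the squares in $\mathrm{Aut}_G(\mathbb{F}^\sigma T)\cong\hat T$.
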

\begin{proof}
For any $\psi\in\mathrm{Aut}_G(\mathbb{F}^\sigma T)$, note that $\rho\circ\psi=\psi^{-1}\circ\rho$.

So, if $\psi=\varphi^2$, for some $\varphi\in\mathrm{Aut}_G(\mathbb{F}^\sigma T)$, then
$$
\rho\psi=\rho\varphi\varphi=\varphi^{-1}\rho\varphi,
$$
which shows that $\rho\psi\sim\rho$. Conversely, assume that $\rho\psi=\varphi^{-1}\rho\varphi$, for some $\varphi$. Then we obtain $\rho\psi=\rho\varphi^2$, which implies $\psi=\varphi^2\in S(\mathrm{Aut}_G(\mathbb{F}^\sigma T))$.
\end{proof}

We summarize the results.
\begin{Thm}
Let $\mathbb{F}$ be a field, $T$ a finite group, and $\sigma:T\times T\to\mathbb{F}^\times$ a 2-cocycle. Then $\mathbb{F}^\sigma T$ admits a degree-inverting involution if and only if $[\sigma]^2=1$. In this case, there exist $|\hat{T}/S(\hat{T})|$ non-equivalent classes of degree-inverting involution on $\mathbb{F}^\sigma T$.\qed
\end{Thm}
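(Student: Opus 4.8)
The plan is to assemble the statement from the results established above, treating its two assertions separately. The first assertion---that $\mathbb{F}^\sigma T$ admits a degree-inverting involution precisely when $[\sigma]^2=1$---is exactly the content of the Proposition, so nothing further is required there. For the counting assertion I would assume $[\sigma]^2=1$, fix once and for all a degree-inverting involution $\rho$ on $\mathbb{F}^\sigma T$ (which exists by the Proposition), and use $\rho$ as a base point to parametrize all the remaining degree-inverting involutions.

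The first key step is to upgrade the parametrization into a genuine bijection. By the lemma asserting that every degree-inverting involution has the form $\rho\circ\psi$ with $\psi\in\mathrm{Aut}_G(\mathbb{F}^\sigma T)$, the assignment $\psi\mapsto\rho\circ\psi$ is surjective onto the set of degree-inverting involutions; it is injective because $\rho\circ\psi_1=\rho\circ\psi_2$ forces $\psi_1=\psi_2$ upon composing with $\rho$ on the left. Combining this with the isomorphism $\mathrm{Aut}_G(\mathbb{F}^\sigma T)\cong\hat T$, one obtains a bijection between $\hat T$ and the set of degree-inverting involutions on $\mathbb{F}^\sigma T$.

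The second, and slightly more delicate, step is to read off the equivalence relation through this bijection. Writing $S=S(\mathrm{Aut}_G(\mathbb{F}^\sigma T))$, I would show that $\rho\circ\psi_1$ and $\rho\circ\psi_2$ are equivalent if and only if $\psi_1$ and $\psi_2$ lie in the same coset of $S$. Since $\mathrm{Aut}_G(\mathbb{F}^\sigma T)$ is abelian and $\rho\circ\psi=\psi^{-1}\circ\rho$, one may set $\rho'=\rho\circ\psi_1$, which is again a degree-inverting involution, and rewrite $\rho\circ\psi_2=\rho'\circ(\psi_1^{-1}\psi_2)$; the intertwining identity $\rho'\circ\psi=\psi^{-1}\circ\rho'$ holds for $\rho'$ as well, so the argument of the last lemma applies verbatim with $\rho'$ as base point and gives $\rho'\sim\rho'\circ(\psi_1^{-1}\psi_2)$ exactly when $\psi_1^{-1}\psi_2\in S$, i.e. when $\psi_1 S=\psi_2 S$. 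Consequently the equivalence classes of degree-inverting involutions correspond bijectively to the cosets of $\mathrm{Aut}_G(\mathbb{F}^\sigma T)/S\cong\hat T/S(\hat T)$, yielding the asserted number $|\hat T/S(\hat T)|$.

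I expect the only genuine obstacle to lie in this last step: one must check that the equivalence relation descends to a well-defined relation on cosets, and this relies on both the commutativity of $\mathrm{Aut}_G(\mathbb{F}^\sigma T)$ and the identity $\rho\circ\psi=\psi^{-1}\circ\rho$. Everything else is a direct appeal to the preceding lemmas. I would also remark that no algebraic-closedness hypothesis on $\mathbb{F}$ is needed here, since each ingredient---the Proposition, the lemma computing $\mathrm{Aut}_G(\mathbb{F}^\sigma T)$, and the equivalence lemma---is valid over an arbitrary field.
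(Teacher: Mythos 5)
Your proposal is correct and follows the same route as the paper, which proves the theorem simply by assembling the preceding Proposition and three Lemmas (hence the \qed with no written proof). Your extra step of re-basing the equivalence lemma at $\rho'=\rho\circ\psi_1$ to pass from ``$\rho\sim\rho\circ\psi$ iff $\psi\in S$'' to the full partition of involutions into cosets of $S(\hat T)$ is exactly the small gap the paper leaves implicit, and you fill it correctly.
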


Now, we are interested in the case where we have simultaneously $\mathbb{F}^\sigma T$ isomorphic to a matrix algebra, and $[\sigma]$ of order 2. The last one can be achieved if we compute the Schur multiplier $M(T)$. The former one is equivalent to: (a) $|T|=n^2$, for some $n$, and (b) $T$ admits an irreducible (projective) $\sigma$-representation of degree $n$.

Although some works were dedicated to either answer the first question, or to compute the Schur multiplier (see, for instance, \cite{Kap1,Kap2}), we were not able to find a single example of a non-abelian group satisfying both conditions. So we leave the following question.

\begin{Question}
Find a non-abelian finite group $T$ of order $n^2$, for some $n\in\mathbb{N}$, and a 2-cocyle $\sigma:T\times T\to\mathbb{F}^\times$ such that $[\sigma]^2=1$, and $\mathbb{F}^\sigma T\cong M_n(\mathbb{F})$.
\end{Question}

\subsection{Abelian case}
Things become easier if we assume a priori the grading group abelian.

The following was essentially proved in \cite{FM2017}:
\begin{Lemma}\label{lem9}
Let $\psi_0:\mathcal{D}\to\mathcal{D}$ be a degree-inverting anti-automorphism, where $\mathcal{D}$ is a central finite-dimensional graded division algebra with support $T$, where $T$ is an abelian group. Then $T$ is an elementary $2$-group.
\end{Lemma}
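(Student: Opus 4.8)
The plan is to reduce everything to the commutation bicharacter $\beta$ of $\mathcal{D}$ and to show that the existence of $\psi_0$ forces $\beta$ to take values in the square roots of unity. First, since $\mathcal{D}$ is a central finite-dimensional graded division algebra with abelian support $T$ (and $\mathbb{F}$ algebraically closed, as standing in this section), the theory of Subsection \ref{abelian_graded_division_algebra} gives $\mathcal{D}\cong\mathbb{F}^\sigma T$ together with a nondegenerate alternating bicharacter $\beta(u,v)=\sigma(u,v)\sigma(v,u)^{-1}$. Because each homogeneous component $\mathcal{D}_u=\mathrm{Span}\{X_u\}$ is one-dimensional and $\psi_0$ is a degree-inverting bijection, there is a map $\mu:T\to\mathbb{F}^\times$ with $\psi_0(X_u)=\mu(u)X_{u^{-1}}$ for all $u\in T$.

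Next I would extract a cocycle identity from the anti-automorphism property, exactly as in the proof of the Proposition above. Computing $\psi_0(X_uX_v)$ in two ways, namely as $\psi_0(X_v)\psi_0(X_u)$ and as $\sigma(u,v)\psi_0(X_{uv})$, yields
\[
\sigma(u,v)\,\mu(uv)=\mu(u)\mu(v)\,\sigma(v^{-1},u^{-1}),
\]
that is, $\sigma=(\delta\mu)\,\bar{\sigma}$. By Lemma \ref{llem1} we have $[\bar{\sigma}]=[\sigma^{-1}]$, so this identity gives $[\sigma]=[\sigma^{-1}]$, i.e.\ $[\sigma]^2=1$. Consequently $\sigma^2=\delta\lambda$ for some $\lambda:T\to\mathbb{F}^\times$.

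The final step is to translate $[\sigma]^2=1$ into a statement about $\beta$. Since $\sigma(u,v)^2=\delta\lambda(u,v)=\lambda(u)\lambda(v)\lambda(uv)^{-1}$ is symmetric in $u$ and $v$, we obtain $\sigma(u,v)^2=\sigma(v,u)^2$, whence $\beta(u,v)^2=1$ for all $u,v\in T$. As $\beta$ is a bicharacter, $\beta(u^2,v)=\beta(u,v)^2=1$ for every $v$, and nondegeneracy of $\beta$ then forces $u^2=1$. Thus every element of $T$ has order dividing $2$, so $T$ is an elementary abelian $2$-group, as claimed.

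I expect no serious obstacle here; the only real insight is that the full involution hypothesis is unnecessary, since the bare anti-automorphism already produces the relation $\sigma=(\delta\mu)\bar{\sigma}$, and that passing to the antisymmetrization converts $[\sigma]^2=1$ into $\beta^2=1$. The one point needing care is the characteristic: when $\mathrm{char}\,\mathbb{F}=2$ the equation $\beta(u,v)^2=1$ collapses to $\beta\equiv1$, which by nondegeneracy makes $T$ trivial, so the conclusion still holds (vacuously) in that case.
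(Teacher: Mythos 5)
Your proof is correct, but it follows a genuinely different route from the paper's. The paper decomposes $T=H_1^2\times\cdots\times H_s^2$ using the structure theory of the nondegenerate alternating bicharacter, realizes $\mathcal{D}$ as a Kronecker product $M_{n_1}\otimes\cdots\otimes M_{n_s}$ of $\varepsilon_i$-graded matrix algebras, notes that each factor is invariant under the anti-automorphism (because the homogeneous components are one-dimensional and each $H_i^2$ is a subgroup), and then cites Lemma 4.6 of \cite{FM2017} to conclude $n_i=2$. You instead stay entirely inside the twisted-group-algebra picture: the anti-automorphism computation gives $\sigma=(\delta\mu)\bar\sigma$, Lemma \ref{llem1} upgrades this to $[\sigma]^2=1$, the symmetry of any coboundary $\delta\lambda$ on an abelian group yields $\beta(u,v)^2=1$, and bimultiplicativity plus nondegeneracy of $\beta$ (equivalent to centrality of $\mathcal{D}$) forces $u^2=1$ for every $u\in T$. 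Your argument is self-contained --- it avoids both the Kronecker decomposition and the external citation, reusing only computations already present in the paper --- and it makes transparent that the key mechanism is the cohomological identity $[\sigma]^2=1$ combined with nondegeneracy of $\beta$; it also makes explicit, as the statement already does, that only the anti-automorphism property is needed, not $\psi_0^2=1$. Your remark about characteristic $2$ is a sensible extra check, though under the standing hypotheses ($\mathcal{D}$ central of dimension $|T|$ over an algebraically closed field) that case only arises trivially. The paper's proof is shorter on the page but outsources the decisive step.
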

\begin{proof}
As mentioned in Subsection \ref{abelian_graded_division_algebra}, $\mathrm{Supp}\,\mathcal{D}=H_1^2\times H_2^2\times\cdots\times H_s^2$, where each $H_i\cong\mathbb{Z}_{n_i}$, and $\mathcal{D}\cong M_{n_1}\otimes M_{n_2}\otimes\cdots\otimes M_{n_s}$, where each $M_{n_i}$ is endowed with an $\varepsilon_i$-grading.

Since every nonzero homogeneous component of $\mathcal{D}$ has dimension $1$, we see that each $1\otimes\cdots1\otimes M_{n_i}\otimes1\cdots\otimes1$ is invariant under the anti-automorphism, with support $1\times\cdots1\times H_i^2\times1\cdots\times1$. From Lemma 4.6 of \cite{FM2017}, we obtain $n_i=2$ and $H_i\cong\mathbb{Z}_2$.
\end{proof}
So, an immediate consequence is the following remark:
\begin{Cor}\label{commutativegradeddiv}
Let $\mathcal{D}$ be a central finite-dimensional graded division algebra over an algebraically closed field $\mathbb{F}$, and assume that $\mathrm{Supp}\,\mathcal{D}$ is commutative. Then an involution on $\mathcal{D}$ is a degree-preserving involution if and only if it is a degree-inverting involution.\qed
\end{Cor}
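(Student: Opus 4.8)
The plan is to reduce the whole statement to the single structural fact that an involution of either kind can exist only when the support $T=\mathrm{Supp}\,\mathcal{D}$ is an elementary abelian $2$-group; once that is established, the biconditional is automatic. First I would invoke the description from Subsection \ref{abelian_graded_division_algebra}: since $T$ is commutative and $\mathcal{D}$ is central over an algebraically closed field, $\mathcal{D}\cong\mathbb{F}^\sigma T$ with every homogeneous component one-dimensional, and the bicharacter $\beta(u,v)=\sigma(u,v)\sigma(v,u)^{-1}$ is a nondegenerate alternating bicharacter. The point I would then isolate is that $\mathcal{D}_g=\mathcal{D}_{g^{-1}}$ for all $g\in T$ precisely when $g^2=1$ for every $g$; so the moment we know $T$ is an elementary $2$-group, the conditions $\psi(\mathcal{D}_g)\subseteq\mathcal{D}_g$ and $\psi(\mathcal{D}_g)\subseteq\mathcal{D}_{g^{-1}}$ are literally identical and both implications hold simultaneously.

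It therefore remains to show that the existence of an involution of either type forces $T$ to be an elementary $2$-group. An involution is in particular an anti-automorphism, so if $\psi$ is degree-inverting this is immediate from Lemma \ref{lem9}. For the degree-preserving case I would argue directly: writing $\psi(X_u)=\mu(u)X_u$ (legitimate since $\dim\mathcal{D}_u=1$) for some $\mu\colon T\to\mathbb{F}^\times$, the anti-automorphism identity $\psi(X_uX_v)=\psi(X_v)\psi(X_u)$ together with commutativity of $T$ yields $\delta\mu=\beta$. Since $\delta\mu$ is symmetric in its two arguments while $\beta$ is alternating, $\beta$ is at once symmetric and alternating, so $\beta(u,v)^2=1$ for all $u,v$. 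Using that $\beta$ is a bicharacter gives $\beta(u^2,v)=\beta(u,v)^2=1$ for every $v$, and nondegeneracy of $\beta$ forces $u^2=1$; hence $T$ is an elementary $2$-group.

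Combining the two cases, whenever $\psi$ is an involution of either kind we obtain $g=g^{-1}$ for all $g\in T$, whence $\mathcal{D}_g=\mathcal{D}_{g^{-1}}$ and the degree-preserving and degree-inverting conditions coincide, giving the equivalence. The only work beyond a direct appeal to Lemma \ref{lem9} is the degree-preserving direction, i.e.\ extracting ``$T$ elementary $2$-group'' from a degree-preserving involution; I expect this short bicharacter computation to be the main (indeed the only) obstacle, and it simply mirrors Lemma \ref{lem9} with the opposite symmetry.
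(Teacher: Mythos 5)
Your proof is correct and follows the route the paper intends: the corollary is presented as an immediate consequence of Lemma \ref{lem9}, i.e.\ everything reduces to the support being an elementary $2$-group, so that $\mathcal{D}_g=\mathcal{D}_{g^{-1}}$ and the degree-preserving and degree-inverting conditions literally coincide. The only place you go beyond the paper is the converse direction (that a degree-preserving involution also forces $T$ to be an elementary $2$-group), which the paper leaves implicit by appeal to the known degree-preserving theory (cf.\ Proposition 2.49 of the Elduque--Kochetov monograph it cites elsewhere); your computation $\delta\mu=\beta$, with $\delta\mu$ symmetric and $\beta$ alternating and nondegenerate, correctly supplies that missing half.
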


\section{Degree-inverting involution on matrix algebras\label{matrix}}
In this section we investigate degree-inverting involution on matrix algebras over an algebraically closed field. The arguments in this section are a copy of the ordinary case \cite[Section 2.4]{EK2013} (see also the original paper by Elduque \cite{Elduque}). If a matrix algebra is endowed with a grading and a degree-inverting involution, then its support does not need to be commutative. This is a contrast with the degree-preserving involution case (see, for instance, \cite[Proposition 2.49]{EK2013}).

We fix an algebraically closed field $\mathbb{F}$ and an arbitrary group $G$. Let $\mathcal{D}$ be a finite-dimensional $G$-graded division algebra, and let $T$ be its support (then $T\subseteq G$ is a finite subgroup). Let $V$ be a finite-dimensional $G$-graded right $\mathcal{D}$-module. We define
$$
V^\ast=\{f:V\to\mathcal{D},\text{ $f$ is a graded $\mathcal{D}$-linear map}\}.
$$
Thus, $V^\ast$ has a natural $G$-grading. For homogeneous $f\in V^\ast$ and $v\in V$, we denote $\langle f,v\rangle=f(v)$ to emphasize the duality between $V$ and $V^\ast$. Moreover, one has
$$
\deg\langle f,v\rangle=\deg f\deg v.
$$
Let $\mathcal{R}=\mathrm{End}_\mathcal{D}(V)$. Then $\mathcal{R}$ is a matrix algebra endowed with a $G$-grading. The natural action of $\mathcal{R}$ on $V$ turns $V$ a graded left $\mathcal{R}$-module. Also, $V^\ast$ has a structure of graded right $\mathcal{R}$-module given by
$$
\langle fr,v\rangle=\langle f,rv\rangle,\quad r\in\mathcal{R},f\in V^\ast,v\in V.
$$

Assume that $\mathcal{R}$ has a degree-inverting anti-automorphism $\psi$. Then $V^\ast$ becomes a left $\mathcal{R}$-module by
\begin{equation}\label{leftVast}
r\cdot f:=f\psi(r),\quad r\in\mathcal{R},f\in V^\ast.
\end{equation}
\begin{Lemma}\label{lem2}
With \eqref{leftVast}, $V^\ast$ is an inverted-graded left $\mathcal{R}$-module, that is, $V^\ast$ is a left $\mathcal{R}$-module and
$$
\mathcal{R}_g\cdot V_t^\ast\subseteq V^\ast_{tg^{-1}},\quad\forall g,t\in G.
$$
\end{Lemma}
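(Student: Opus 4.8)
The plan is to verify the two assertions separately: first that \eqref{leftVast} genuinely defines a left $\mathcal{R}$-module structure on $V^\ast$, and then that this structure inverts the grading in the stated sense. Both are direct computations; the only point requiring care is the bookkeeping of the order reversal coming from the anti-automorphism $\psi$, together with the grading convention for $V^\ast$ as a right $\mathcal{R}$-module.

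For the module axioms, I would start from the fact that $V^\ast$ is already a right $\mathcal{R}$-module, so that $(fa)b=f(ab)$ for all $a,b\in\mathcal{R}$ and $f\in V^\ast$. Using that $\psi$ is an anti-automorphism, hence $\psi(rs)=\psi(s)\psi(r)$, one computes $(rs)\cdot f=f\psi(rs)=f(\psi(s)\psi(r))=(f\psi(s))\psi(r)=r\cdot(s\cdot f)$, which is precisely associativity of the left action. Additivity and $\mathbb{F}$-bilinearity are inherited from the corresponding properties of the right action, and the unit axiom $1\cdot f=f$ follows once we note that a (bijective) anti-automorphism of a unital algebra fixes the identity, so that $1\cdot f=f\psi(1)=f$.

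For the grading condition, I would fix $g,t\in G$ and take homogeneous $r\in\mathcal{R}_g$ and $f\in V^\ast_t$. Since $\psi$ is degree-inverting, $\psi(r)\in\mathcal{R}_{g^{-1}}$. Invoking the grading of $V^\ast$ as a right $\mathcal{R}$-module, namely $V^\ast_h\mathcal{R}_k\subseteq V^\ast_{hk}$, one concludes $r\cdot f=f\psi(r)\in V^\ast_t\mathcal{R}_{g^{-1}}\subseteq V^\ast_{tg^{-1}}$, which is the desired inclusion; extending from homogeneous elements to all of $\mathcal{R}_g$ and $V^\ast_t$ by linearity finishes the argument.

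I do not expect a genuine obstacle here: the statement is a formal consequence of the anti-automorphism property of $\psi$ combined with the already-established right-module grading on $V^\ast$. The one step worth double-checking is the degree convention for $V^\ast$ itself; should one wish to re-derive it rather than quote it, it follows from $\deg\langle f,v\rangle=\deg f\,\deg v$ and the defining relation $\langle fa,v\rangle=\langle f,av\rangle$, which for $a\in\mathcal{R}_{g^{-1}}$, $v\in V_h$ gives $\deg\langle fa,v\rangle=t\,g^{-1}h$ and hence pins down $fa\in V^\ast_{tg^{-1}}$, exactly as needed above.
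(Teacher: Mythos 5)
Your proposal is correct and follows essentially the same route as the paper: the paper's proof is exactly the degree computation via the pairing $\deg\langle f,v\rangle=\deg f\,\deg v$ that you sketch in your final paragraph, while you additionally spell out the (routine) left-module axioms that the paper leaves implicit. No issues.
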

\begin{proof}
Let $r\in\mathcal{R}_g$, $f\in V^\ast_t$, $v\in V_h$. Then
$$
\deg(r\cdot f)h=\deg\langle r\cdot f,v\rangle=\deg\langle f\psi(r),v\rangle=\deg\langle f,\psi(r)v\rangle=tg^{-1}h,
$$
thus, $\mathcal{R}_gV_t^\ast\subseteq V_{tg^{-1}}^\ast$.
\end{proof}

For any $G$-graded vector space $W=\bigoplus_{g\in G}W_g$, we define $W^{[-]}=\bigoplus_{g\in G}W_g^{[-]}$, where $W_g^{[-]}=W_{g^{-1}}$. These are known as \emph{Veronese modules} (see \cite[Example 1.2.7]{Hazbook}, for a more general construction).
\begin{Lemma}\label{lem3}
$V$ is an inverted-graded left $\mathcal{R}$-module if and only if $V^{[-]}$ is a graded left $\mathcal{R}$-module.
\end{Lemma}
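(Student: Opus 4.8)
The plan is to observe that $V^{[-]}$ and $V$ carry the very same underlying left $\mathcal{R}$-module structure; only the labelling of the homogeneous components changes, through the bijection $g\mapsto g^{-1}$ of $G$. Thus the lemma reduces to a purely formal comparison of the two compatibility conditions, once the grading of $V^{[-]}$ is translated back into that of $V$.

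First I would spell out the condition that $V^{[-]}$ be a \emph{graded} left $\mathcal{R}$-module, namely $\mathcal{R}_g\cdot (V^{[-]})_t\subseteq (V^{[-]})_{gt}$ for all $g,t\in G$ (using the left-module convention $\mathcal{R}_g V_h\subseteq V_{gh}$). By the definition of the Veronese grading, $(V^{[-]})_t=V_{t^{-1}}$ and $(V^{[-]})_{gt}=V_{(gt)^{-1}}=V_{t^{-1}g^{-1}}$, so this condition reads
$$
\mathcal{R}_g\cdot V_{t^{-1}}\subseteq V_{t^{-1}g^{-1}},\quad\forall g,t\in G.
$$
Next I would substitute $s=t^{-1}$. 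Since $t\mapsto t^{-1}$ is a bijection of $G$, quantifying over all $t$ is the same as quantifying over all $s$, and the displayed inclusion becomes
$$
\mathcal{R}_g\cdot V_s\subseteq V_{sg^{-1}},\quad\forall g,s\in G,
$$
which is exactly the defining condition for $V$ to be an \emph{inverted-graded} left $\mathcal{R}$-module (cf. Lemma \ref{lem2}). Because every step is an equivalence --- the reindexing $g\mapsto g^{-1}$ is invertible, and the substitution $s=t^{-1}$ merely renames a universally quantified variable --- the two conditions are equivalent, which yields both directions of the ``if and only if'' at once.

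There is no genuine obstacle here: the statement is a bookkeeping identity. The only point that requires a moment of care is to keep the left-module convention $\mathcal{R}_gV_h\subseteq V_{gh}$ straight, and to check that inverting the degree indices of $V$ converts the ``inverted'' target $V_{tg^{-1}}$ into the ``ordinary'' target $V_{gt}$ after the substitution. I would therefore present the argument as a single chain of equivalent conditions, so that both implications of the lemma are established simultaneously.
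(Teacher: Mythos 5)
Your argument is correct and is essentially the same bookkeeping computation as the paper's proof: both unwind the Veronese grading $(V^{[-]})_t=V_{t^{-1}}$ and reindex $t\mapsto t^{-1}$ to match the inverted-graded condition $\mathcal{R}_gV_s\subseteq V_{sg^{-1}}$. The only cosmetic difference is that you present it as one chain of equivalences while the paper writes the two implications separately.
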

\begin{proof}
Assume that $V^{[-]}$ is a graded left $\mathcal{R}$-module. Then
$$
\mathcal{R}_gV_t=\mathcal{R}_gV^{[-]}_{t^{-1}}\subseteq V^{[-]}_{gt^{-1}}=V_{tg^{-1}}.
$$
Conversely, if $V$ is an inverted-graded left $\mathcal{R}$-module, then
$$
\mathcal{R}_gV^{[-]}_t=\mathcal{R}_gV_{t^{-1}}\subseteq V_{t^{-1}g^{-1}}=V^{[-]}_{gt}.
$$
\end{proof}
\begin{Lemma}\label{lem4}
There exists a degree-inverting $\mathcal{R}$-isomorphism $\varphi_1:V^{[g_0]}\to V^\ast$, for some $g_0\in G$. Equivalently, $\varphi_1:V^{[g_0]}\to V^{\ast[-]}$ is a $G$-graded $\mathcal{R}$-isomorphism.
\end{Lemma}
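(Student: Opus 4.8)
The plan is to recognize $V^{*[-]}$ as one of the graded-simple left $\mathcal{R}$-modules, all of which are shifts of $V$, and then to translate the resulting graded isomorphism into the asserted degree-inverting one by unwinding the definition of the Veronese functor.

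First I would record the module-theoretic input supplied by Section \ref{preliminaries}. Since $\mathcal{R}=\mathrm{End}_\mathcal{D}(V)$ is a matrix algebra, it is simple, and $V$ is its (up to graded shift) unique graded-simple left module: by the graded Density Theorem, $\mathcal{R}\cong M_m(\mathbb{F})\otimes\mathcal{D}$ acts on $V$ as a column module, so every graded-simple left $\mathcal{R}$-module is graded-isomorphic to a shift $V^{[g]}$ for some $g\in G$. Hence, to prove the lemma it suffices to produce a graded-simple left $\mathcal{R}$-module structure on $V^{*[-]}$ and invoke this classification.

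Next I would analyze $V^*$. As the graded $\mathcal{D}$-linear dual of the simple left module $V$, it is a (graded-)simple right $\mathcal{R}$-module for the action $\langle fr,v\rangle=\langle f,rv\rangle$. Feeding the degree-inverting anti-automorphism $\psi$ into \eqref{leftVast}, Lemma \ref{lem2} shows that $V^*$ becomes an inverted-graded left $\mathcal{R}$-module; because $\psi$ is an anti-automorphism, it carries the simple right-module structure to a simple left-module structure, leaving the underlying space unchanged. Applying Lemma \ref{lem3} to $V^*$, the Veronese module $V^{*[-]}$ is then a genuine $G$-graded left $\mathcal{R}$-module. Its underlying ungraded module is still simple (the $[-]$ operation only relabels degrees), so $V^{*[-]}$ is graded-simple. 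By the classification of the first step, there is therefore a $G$-graded $\mathcal{R}$-isomorphism $\varphi_1:V^{[g_0]}\to V^{*[-]}$ for some $g_0\in G$. The \emph{equivalently} clause is then formal: a graded $\mathcal{R}$-map into $W^{[-]}$ sends the $g$-component into $W_{g^{-1}}$, i.e.\ it is precisely a degree-inverting $\mathcal{R}$-map into $W$; taking $W=V^*$ turns $\varphi_1$ into the claimed degree-inverting isomorphism $V^{[g_0]}\to V^*$.

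I expect the main obstacle to be the first step, namely justifying cleanly that up to shift $V$ is the \emph{unique} graded-simple left $\mathcal{R}$-module, so that $V^{*[-]}$ is forced to be a shift of $V$ and the degree $g_0$ genuinely exists. This rests on the graded analogue of Wedderburn--Artin/Morita theory for $\mathcal{R}=\mathrm{End}_\mathcal{D}(V)$, together with the bookkeeping that the graded $\mathcal{D}$-module ranks of $V$ and of $V^{*[-]}$ match, so that an actual graded isomorphism -- not merely an ungraded one -- is available after choosing the correct shift $g_0$.
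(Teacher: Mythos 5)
Your argument is correct and follows the same route as the paper: the paper's proof simply invokes Lemma \ref{lem3} together with Lemma 2.7 of \cite{EK2013}, which is exactly the uniqueness-up-to-shift of the graded-simple left $\mathcal{R}$-module that you identify as the key input. The only point you flag as a potential obstacle --- that $V^{\ast[-]}$ must be a shift of $V$ as a \emph{graded} module --- is precisely what that cited lemma supplies, so no gap remains.
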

\begin{proof}
It follows from Lemma \ref{lem3} and Lemma 2.7 of \cite{EK2013}.
\end{proof}
From now on, we fix $g_0\in G$ and $\varphi_1:V^{[g_0]}\to V^\ast$, as in Lemma \ref{lem4}.

\begin{Lemma}\label{lem5}
There exists a homogeneous anti-automorphism $\psi_0:\mathcal{D}\to\mathcal{D}$ such that 
\begin{equation}\label{eqlem5}
\varphi_1(vd)=\psi_0(d)\varphi_1(v),
\end{equation}
for all $v\in V$, $d\in\mathcal{D}$. Moreover, $\deg\psi_0(d)=g_0^{-1}(\deg d)^{-1}g_0$, for any nonzero homogeneous $d\in D$.
\end{Lemma}
\begin{proof}
For any homogeneous $d\in\mathcal{D}$, let $R_d:V\to V$ be the right multiplication by $d$, and $L_d:V^\ast\to V^\ast$ the left multiplication. We will prove that the following sets coincide:
\begin{align*}
&S_1=\{\varphi:V^{[g]}\to V^\ast\text{ degree-inverting $\mathcal{R}$-isomorphism, for some $g\in G$}\},\\%
&S_2=\{\varphi_1\circ R_d\mid\text{ $d\in\mathcal{D}^\times$ homogeneous}\},\\%
&S_3=\{L_d\circ\varphi_1\mid\text{ $d\in\mathcal{D}^\times$ homogeneous}\}.
\end{align*}
It is clear that $S_2,S_3\subseteq S_1$. Given $\varphi\in S_1$, we have $\varphi_1^{-1}\circ\varphi\in\mathrm{End}_\mathcal{R}(V)\cong\mathcal{D}$. Thus, for some nonzero homogeneous $d\in\mathcal{D}$, we have $\varphi_1^{-1}\circ\varphi=R_d$; which implies $\varphi=\varphi_1\circ R_d\in S_2$. Similarly, $\varphi\circ\varphi_1^{-1}\in\mathrm{End}_\mathcal{R}(V^\ast)\cong\mathcal{D}$, so we can find a nonzero homogeneous $d\in\mathcal{D}$ such that $\varphi\circ\varphi_1^{-1}=L_d$. Hence, $\varphi=L_d\circ\varphi_1\in S_3$.

Now, since $S_2=S_3$, given a nonzero homogeneous $d\in\mathcal{D}$, we can find a nonzero homogeneous $d'\in\mathcal{D}$ such that $L_{d'}\circ\varphi_1=\varphi_1\circ R_d$. Define $\psi_0:\mathcal{D}\to\mathcal{D}$ linearly, such that $\psi(d)=d'$. By construction, $\psi_0$ is a linear isomorphism, and it is an anti-homomorphism. Also, $L_{\psi(d)}\circ\varphi_1=\varphi_1\circ R_d$ is equivalent to $\psi(d)\varphi_1(v)=\varphi_1(vd)$, for all $v\in V$. Moreover, from this relation, we derive the following:
$$
\deg\psi(d)\left((\deg v)g_0\right)^{-1}=\left((\deg v)(\deg d)g_0\right)^{-1}.
$$
Or, equivalently, $\deg\psi(d)=g_0^{-1}(\deg d)^{-1}g_0$.
\end{proof}
\begin{Remark}\label{remarkafterlemma}
If it happens that $g_0\in\mathrm{Supp}\,\mathcal{D}$, then, by the proof of Lemma \ref{lem5}, we can replace $\varphi_1$ by $\varphi_1\circ R_{d_0}$, where $d_0\in\mathcal{D}$ is homogeneous with $\deg d_0=g_0$. Thus, $\deg\psi_0(d)=(\deg d)^{-1}$ for all homogeneous $d$, so that the new $\psi_0:\mathcal{D}\to\mathcal{D}$ is a degree-inverting involution on $\mathcal{D}$.
\end{Remark}


Now, we have a non-degenerate $\mathbb{F}$-bilinear form $B:V\times V\to\mathcal{D}$ given by
$$
B(v,w)=\langle\varphi_1(v),w\rangle.
$$
This form satisfies the following properties:
\begin{enumerate}
\renewcommand{\labelenumi}{(\roman{enumi})}
\item $\deg B(v,w)=g_0^{-1}(\deg v)^{-1}\deg w$, for all homogeneous $v,w\in V$,
\item $B$ is $\psi_0$-sesquilinear, that is, $B(vd,w)=\psi_0(d)B(v,w)$, $B(v,wd)=B(v,w)d$, $v,w\in V$, $d\in\mathcal{D}$,
\item $B(rv,w)=B(v,\psi(r)w)$, $v,w\in V$, $r\in\mathcal{R}$.
\end{enumerate}

Conversely, a pair $(B,\psi_0)$ satisfying (i)--(iii) determines uniquely $\psi$, that is, we can recover $\psi$ from the pair $(B,\psi_0)$. Indeed, let $\{w_1,\ldots,w_n\}$ be a homogeneous $\mathcal{D}$-basis of $V$. Let $\Phi=(x_{ij})$, where $x_{ij}=B(w_i,w_j)$, be the matrix of $B$. Given $r\in\mathcal{R}$, let $R=(r_{ij})$ be its matrix form, and $\psi(R)=(r_{ij}')$ the matrix form of $\psi(r)$. Then, we have
\begin{align*}
&B(rw_k,w_\ell)=B(\sum_{i=1}^nw_ir_{ik},w_\ell)=\sum_{i=1}^n\psi_0(r_{ik})x_{i\ell}\\
&B(w_k,\psi(r)w_\ell)=B(w_k,\sum_{i=1}^nw_ir_{i\ell}')=\sum_{i=1}^nx_{ki}r_{i\ell}'
\end{align*}
So, we obtain the equation $\psi_0(R)^t\Phi=\Phi R$. Hence,
\begin{equation}\label{PSI}
\psi:X\in\mathcal{R}\mapsto\Phi^{-1}\psi_0(X^t)\Phi\in\mathcal{R},
\end{equation}
where we identify, via Kronecker product, $\mathcal{R}=M_n(\mathcal{D})$, $\psi_0(X)$ means that we are applying $\psi_0$ in the entries of $X$, and $^t$ is the usual matrix transposition of the $n\times n$ matrices $M_n(\mathcal{D})$. 

We summarize the results obtained so far:
\begin{Prop}[{cf. \cite[Theorem 2.57]{EK2013}}]
Let $G$ be any group, $\mathcal{D}$ a graded division algebra, $V$ a finite-dimensional graded right $\mathcal{D}$-module and $\mathcal{R}=\mathrm{End}_\mathcal{D}(V)$. Assume that $\psi$ is a degree-inverting anti-automorphism of $\mathcal{R}$. Then there exist $g_0\in G$, an anti-automorphism $\psi_0$ on $\mathcal{D}$ satisfying $\deg\psi_0(d)=g_0^{-1}(\deg d)^{-1}g_0$ for all homogeneous $d\in\mathcal{D}$, and a non-degenerate form $B:V\times V\to\mathcal{D}$ satisfying (i)--(iii). If $(\psi_0',B')$ is another such pair, then there exists a nonzero homogeneous $d\in\mathcal{D}$ such that $B'=dB$ and $\psi_0'(x)=d\psi_0(x)$, $\forall x\in\mathcal{D}$.

Conversely, given a pair $(\psi_0,B)$ satisfying (i)--(iii), there exists a degree-inverting anti-automorphism on $\mathcal{R}$.\qed
\end{Prop}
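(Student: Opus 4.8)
The existence half is, to a large extent, already assembled in the discussion preceding the statement, so my plan would be to organize that and then give genuine arguments for the uniqueness clause and the converse. For existence I would take $\varphi_1\colon V^{[g_0]}\to V^\ast$ from Lemma~\ref{lem4}, the anti-automorphism $\psi_0$ from Lemma~\ref{lem5}, and set $B(v,w)=\langle\varphi_1(v),w\rangle$. Since $\deg\langle f,v\rangle=\deg f\,\deg v$ and $\varphi_1$ is the degree-inverting isomorphism of Lemma~\ref{lem4}, one reads off $\deg\varphi_1(v)=g_0^{-1}(\deg v)^{-1}$, which is exactly (i). Property (ii) comes from the intertwining identity $\varphi_1(vd)=\psi_0(d)\varphi_1(v)$ of Lemma~\ref{lem5} together with the $\mathcal{D}$-linearity of the elements of $V^\ast$. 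Property (iii) follows because $\varphi_1$ is an isomorphism of left $\mathcal{R}$-modules for the action $r\cdot f=f\psi(r)$ of \eqref{leftVast}, so $\varphi_1(rv)=\varphi_1(v)\psi(r)$ and hence $B(rv,w)=\langle\varphi_1(v)\psi(r),w\rangle=\langle\varphi_1(v),\psi(r)w\rangle=B(v,\psi(r)w)$.

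For the uniqueness clause the key tool is the triple equality $S_1=S_2=S_3$ proved inside Lemma~\ref{lem5}. Given a second pair $(\psi_0',B')$ satisfying (i)--(iii) relative to the same $\psi$, I would reconstruct $\varphi_1'$ by $\langle\varphi_1'(v),w\rangle=B'(v,w)$: non-degeneracy of $B'$ makes $\varphi_1'$ bijective, (i) makes it degree-inverting for a suitable shift, and (iii) for $B'$ gives $\varphi_1'(rv)=\varphi_1'(v)\psi(r)=r\cdot\varphi_1'(v)$, so $\varphi_1'\in S_1=S_3$. Hence $\varphi_1'=L_d\circ\varphi_1$ for a nonzero homogeneous $d\in\mathcal{D}$, whence $B'(v,w)=\langle d\varphi_1(v),w\rangle=dB(v,w)$, i.e.\ $B'=dB$. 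Comparing $\varphi_1'(vx)=\psi_0'(x)\varphi_1'(v)$ with $\varphi_1'(vx)=d\varphi_1(vx)=d\psi_0(x)\varphi_1(v)$ and using that $V^\ast$ is a faithful left $\mathcal{D}$-module then yields $\psi_0'(x)d=d\psi_0(x)$, i.e.\ the conjugation relation $\psi_0'=d\,\psi_0(\cdot)\,d^{-1}$ between $\psi_0$ and $\psi_0'$.

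For the converse I would define $\psi$ as the adjoint of $B$, coordinate-free rather than through matrices: given $r\in\mathcal{R}$, let $\psi(r)$ be the unique map satisfying $B(rv,w)=B(v,\psi(r)w)$ for all $v,w$, existence and uniqueness coming from non-degeneracy of $B$. The right-sesquilinearity $B(v,wd)=B(v,w)d$ shows $\psi(r)$ is $\mathcal{D}$-linear, so $\psi(r)\in\mathcal{R}$; applying the adjoint identity to a product forces $\psi(rs)=\psi(s)\psi(r)$ by non-degeneracy; and $\mathbb{F}$-linearity is immediate. Degree inversion is obtained by feeding homogeneous $r,v,w$ into the adjoint identity and cancelling the common factors using (i), giving $\deg\psi(r)=(\deg r)^{-1}$. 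Fixing a homogeneous $\mathcal{D}$-basis then recovers precisely the matrix expression \eqref{PSI}.

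The additivity and anti-multiplicativity verifications are routine. The step I expect to be most delicate is the degree bookkeeping, precisely because here neither $G$ nor the support $T$ is assumed abelian: the $g_0$-twist appearing in $\deg\psi_0(d)=g_0^{-1}(\deg d)^{-1}g_0$ and in (i) must be propagated through every identity with the correct order of multiplication in $G$, which is where sign-of-the-convention errors would creep in. A secondary subtlety is the faithfulness argument in the uniqueness clause—passing from an equality of left multiplications on $V^\ast$ to an equality inside $\mathcal{D}$—and making sure the resulting relation between $\psi_0$ and $\psi_0'$ is the genuine conjugation by $d$ rather than a one-sided product.
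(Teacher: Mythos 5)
Your proposal is correct, and the existence half is essentially the paper's own proof: the authors prove this proposition simply by summarizing the preceding discussion (Lemmas \ref{lem4} and \ref{lem5}, the definition $B(v,w)=\langle\varphi_1(v),w\rangle$, and the displayed computation leading to \eqref{PSI}), which is exactly the material you reorganize. You diverge in two places, both to the good. For the converse, the paper constructs $\psi$ through the matrix formula \eqref{PSI} (choose a homogeneous $\mathcal{D}$-basis, form $\Phi=(B(w_i,w_j))$, set $\psi(X)=\Phi^{-1}\psi_0(X^t)\Phi$), whereas you define $\psi$ intrinsically as the $B$-adjoint and only afterwards recover \eqref{PSI}; the two are equivalent, and your route makes basis-independence automatic, at the small cost of having to check that the adjoint of a homogeneous $r$ is itself homogeneous of degree $(\deg r)^{-1}$ (split $\psi(r)$ into homogeneous components and apply (i) together with non-degeneracy componentwise) and that $\psi$ is bijective (injectivity from non-degeneracy, then dimension count). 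For the uniqueness clause the paper records no argument at all, while you supply one via the equality $S_1=S_2=S_3$ from the proof of Lemma \ref{lem5}; note that your computation yields $\psi_0'(x)=d\,\psi_0(x)\,d^{-1}$, which is the correct relation (and the one in \cite[Theorem 2.57]{EK2013}) --- the formula $\psi_0'(x)=d\psi_0(x)$ as printed in the statement cannot hold for an anti-automorphism (take $x=1$) and must be read as conjugation by $d$. The only bookkeeping point you leave implicit is that if the second pair comes with its own shift $g_0'$, then property (i) applied to $B'=dB$ forces $g_0'=g_0(\deg d)^{-1}$, so the element $g_0$ is determined only up to right multiplication by $T$.
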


Now, from now on, we assume that $\psi$ is a degree-inverting \emph{involution}, that is, $\psi^2=1$.

\begin{Lemma}\label{lem7}
If $\psi$ is an involution, then
$$
B(w,v)=\varepsilon_B\psi_0(B(v,w)),\quad\forall v,w\in V,
$$
where $\varepsilon_B\in\{1,-1\}$.
\end{Lemma}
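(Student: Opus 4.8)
The plan is to compare $B$ with its ``flipped conjugate'' and to show that the two differ only by a central scalar sign. Concretely, define a second $\mathbb{F}$-bilinear map $\gamma\colon V\times V\to\mathcal{D}$ by $\gamma(v,w):=\psi_0(B(w,v))$. Property (ii) shows at once that $\gamma$ is again nondegenerate and left-$\psi_0$-semilinear in the first slot, $\gamma(vd,w)=\psi_0(d)\gamma(v,w)$. The target identity, after interchanging the roles of $v$ and $w$, is exactly the assertion $\gamma=\varepsilon_B B$ with $\varepsilon_B\in\{1,-1\}$, so this is what I would aim for.

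The first key step is to check that $\gamma$ encodes the \emph{same} involution $\psi$, and this is the only place where the hypothesis $\psi^2=1$ (as opposed to $\psi$ being a mere anti-automorphism) is essential. Replacing $r$ by $\psi(r)$ in (iii) and using $\psi^2=1$ gives the companion adjunction $B(x,ry)=B(\psi(r)x,y)$. Substituting this inside $\gamma$ yields
\[
\gamma(rv,w)=\psi_0\bigl(B(w,rv)\bigr)=\psi_0\bigl(B(\psi(r)w,v)\bigr)=\gamma(v,\psi(r)w),
\]
so $\gamma$ satisfies (iii) with the very same $\psi$.

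Next I would linearize the comparison. Since $B$ is nondegenerate as an $\mathbb{F}$-bilinear map, there is a unique $\mathbb{F}$-linear bijection $\Theta\colon V\to V$ with $\gamma(v,w)=B(\Theta v,w)$ for all $v,w$. The adjunction relations for $B$ and $\gamma$ (both with respect to $\psi$) force $\Theta(rv)=r\Theta(v)$, so $\Theta\in\mathrm{End}_{\mathcal{R}}(V)\cong\mathcal{D}$; hence $\Theta$ is right multiplication $R_{d_0}$ by a homogeneous $d_0\in\mathcal{D}^{\times}$, giving $\gamma(v,w)=\psi_0(d_0)B(v,w)$. Feeding this back into the left-semilinearity of $\gamma$ and using that $\psi_0$ is an \emph{anti}-automorphism shows $\psi_0(d_0)$ commutes with all of $\mathcal{D}$; as $\mathcal{D}$ is central simple this means $\psi_0(d_0)$, hence $d_0$, lies in $\mathbb{F}^{\times}$. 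Thus $\gamma(v,w)=d_0B(v,w)$, i.e.\ $\psi_0(B(w,v))=d_0B(v,w)$, and interchanging $v,w$ gives $B(w,v)=\varepsilon_B\psi_0(B(v,w))$ with $\varepsilon_B:=d_0^{-1}\in\mathbb{F}^{\times}$. (This reproduces the uniqueness statement of the preceding Proposition in the involutive situation, but deriving it by hand avoids assuming in advance that $\psi_0$ is an involution.)

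It remains to prove $\varepsilon_B^{2}=1$, and this is the step I expect to be the most delicate, precisely because a priori $\psi_0$ need not be an involution. Applying the identity twice yields $\psi_0^{2}(B(v,w))=\varepsilon_B^{-2}B(v,w)$ for all $v,w$. The point is then that the image of $B$ is a nonzero two-sided ideal of $\mathcal{D}$: it is stable under right multiplication by $\mathcal{D}$ via (ii), and under left multiplication because $\psi_0(d)B(v,w)=B(vd,w)$ with $\psi_0$ surjective. Since $\mathcal{D}$ is simple, this ideal is all of $\mathcal{D}$, so $\psi_0^{2}=\varepsilon_B^{-2}\cdot\mathrm{id}$ on the whole of $\mathcal{D}$. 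But $\psi_0^{2}$ is an algebra automorphism, so $\psi_0^{2}(xy)=\psi_0^{2}(x)\psi_0^{2}(y)$ forces $\varepsilon_B^{-2}=\varepsilon_B^{-4}$, whence $\varepsilon_B^{2}=1$ and $\varepsilon_B\in\{1,-1\}$ (with the bonus $\psi_0^{2}=\mathrm{id}$). The real obstacle is therefore not the scalar comparison itself but pinning down $\psi_0^2$; it is the simplicity of $\mathcal{D}$ that upgrades the pointwise eigenvalue relation on the values of $B$ to the global statement that the twisting scalar is an involutive sign.
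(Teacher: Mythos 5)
Your overall strategy is the paper's: form the flipped conjugate $\gamma(v,w)=\psi_0(B(w,v))$ (the paper's $\bar B$), represent it through $B$ by an intertwining operator, use property (iii) together with $\psi^2=1$ to show the intertwiner centralizes $\mathcal{R}$, and read off the sign. The genuine problem is the sentence ``Since $B$ is nondegenerate as an $\mathbb{F}$-bilinear map, there is a unique $\mathbb{F}$-linear bijection $\Theta$ with $\gamma(v,w)=B(\Theta v,w)$.'' That inference is false once $\dim\mathcal{D}>1$: by (ii) each functional $B(u,\cdot)$ is right-$\mathcal{D}$-linear, and a dimension count ($\dim_{\mathbb F}\mathrm{Hom}_{\mathcal D}(V,\mathcal D)=\dim_{\mathbb F}V$) shows that $\{B(u,\cdot):u\in V\}$ equals $\mathrm{Hom}_{\mathcal D}(V,\mathcal D)$, a \emph{proper} subspace of $\mathrm{Hom}_{\mathbb F}(V,\mathcal D)$. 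Hence $\Theta v$ exists if and only if $\gamma(v,\cdot)$ is right-$\mathcal{D}$-linear; but $\gamma(v,wd)=\psi_0(\psi_0(d)B(w,v))=\gamma(v,w)\,\psi_0^2(d)$, so (choosing homogeneous $v,w$ with $B(w,v)\ne0$, hence invertible) the existence of $\Theta$ is \emph{equivalent} to $\psi_0^2=\mathrm{id}$ --- exactly the fact you only obtain at the very end as a ``bonus''. The argument is circular at this point.

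To be fair, the paper's own proof is terse at the same spot: it asserts that $\bar B$ ``satisfies (ii)'', which in the second slot again presupposes $\psi_0^2=\mathrm{id}$. Two honest repairs: (a) place the intertwiner in the \emph{second} argument, $\gamma(v,w)=B(v,\Theta'w)$ --- there the $\psi_0$-semilinearity types of $\gamma$ and $B$ in the first slot agree unconditionally, so $\Theta'$ exists by the same dimension count, and your subsequent steps (commutation with $\mathcal{R}$ via (iii), hence $\Theta'=R_{d_0}$, then centrality of $d_0$) can be pushed through; or (b) first use the freedom $(B,\psi_0)\mapsto(dB,\,\mathrm{Int}(d)\circ\psi_0)$ recorded in the preceding Proposition to normalize $\psi_0$ to an involution before comparing $B$ with $\gamma$, which is how \cite{EK2013} proceeds. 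Your closing step for $\varepsilon_B^2=1$ (upgrading $\psi_0^2=\varepsilon_B^{-2}\mathrm{id}$ from the values of $B$ to all of $\mathcal{D}$ and then invoking multiplicativity) is a legitimate alternative to the paper's one-line $B(v,w)=B(Qv,Qw)=\lambda^2B(v,w)$, though it additionally uses ungraded simplicity of $\mathcal{D}$ (valid in this section, where $\mathcal{D}\cong M_\ell(\mathbb{F})$).
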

\begin{proof}
Define $\bar{B}(v,w)=\psi_0(B(w,v))$. Then $\bar B$ is a non-degenerate $\psi_0$-sesquilinear form satisfying (ii). Thus, we can find an invertible $\mathcal{D}$-linear $Q:V\to V$ such that $\bar{B}(v,w)=B(Qv,w)$, for all $v,w\in V$. Hence, for any $r\in\mathcal{R}$, $v,w\in V$,
\begin{align*}
B(v,rw)&=B(\psi(r)v,w)=\psi_0\bar{B}(w,\psi(r)v)=\psi_0B(Qw,\psi(r)v)=\psi_0B(rQw,v)=\\
&=\bar{B}(v,rQw)=B(Qv,rQw).
\end{align*}
Taking $r=1$, we see that $B(v,w)=B(Qv,Qw)$ for all $v,w\in V$. Hence, we have
$$
B(v,rw)=B(Qv,rQw)=B(v,Q^{-1}rQw).
$$
So $r=Q^{-1}rQ$, for all $r\in\mathcal{R}$. This gives $Q=\lambda\in\mathbb{F}$. Moreover, $B(v,w)=\lambda^2 B(v,w)$, for all $v,w\in V$, which implies $\lambda\in\{1,-1\}$. Thus, $\psi_0B(w,v)=\bar{B}(v,w)=\varepsilon_BB(v,w)$, where $\varepsilon_B=\lambda$.
\end{proof}
As a result, $B$ is \emph{balanced}, that is, $B(v,w)=0$ if and only if $B(w,v)=0$.

Given any $\mathcal{D}$-subspace $U\subseteq V$, we define
$$
U^\perp=\{x\in V\mid B(x,U)=0\}=\{x\in V\mid B(U,x)=0\}.
$$

The following result is standard:
\begin{Lemma}\label{lem8}
Let $B:V\times V\to\mathcal{D}$ be a non-degenerate balanced $\mathbb{F}$-bilinear form. Given a $\mathcal{D}$-subspace $U\subseteq V$, we have  $V=U\oplus U^\perp$ if and only if $B|_U$ is non-degenerate.\qed
\end{Lemma}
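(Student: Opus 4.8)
The plan is to reduce the biconditional to the single intermediate fact that $B|_U$ is non-degenerate precisely when $U\cap U^\perp=0$, and then to close the argument with a dimension count. First I would note that the restriction $B|_U\colon U\times U\to\mathcal{D}$ inherits the balanced property from $B$ (established just after Lemma \ref{lem7}), so its left and right radicals coincide; and the radical of $B|_U$ is $\{u\in U\mid B(u,U)=0\}$, which by the very definition of $U^\perp$ equals $U\cap U^\perp$. Hence $B|_U$ is non-degenerate if and only if $U\cap U^\perp=0$. The forward implication is then immediate: if $V=U\oplus U^\perp$ then $U\cap U^\perp=0$, so $B|_U$ is non-degenerate.

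For the converse I would establish the inequality $\dim_\mathbb{F}U^\perp\ge\dim_\mathbb{F}V-\dim_\mathbb{F}U$. Consider the $\mathbb{F}$-linear map $\Theta\colon V\to\mathrm{Hom}_\mathcal{D}(U,\mathcal{D})$ defined by $\Theta(v)=B(v,-)|_U$; for fixed $v$ the assignment $u\mapsto B(v,u)$ is right $\mathcal{D}$-linear by property (ii), so $\Theta$ is well defined, and its kernel is exactly $U^\perp$ (again by the definition of $U^\perp$). Rank--nullity over $\mathbb{F}$ gives $\dim_\mathbb{F}V=\dim_\mathbb{F}U^\perp+\dim_\mathbb{F}(\mathrm{im}\,\Theta)\le\dim_\mathbb{F}U^\perp+\dim_\mathbb{F}\mathrm{Hom}_\mathcal{D}(U,\mathcal{D})$. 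Since $\mathrm{Hom}_\mathcal{D}(-,\mathcal{D})$ preserves $\mathbb{F}$-dimension over the graded division algebra $\mathcal{D}$, we have $\dim_\mathbb{F}\mathrm{Hom}_\mathcal{D}(U,\mathcal{D})=\dim_\mathbb{F}U$, which yields the claimed inequality.

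Combining the two ingredients finishes the converse. Assuming $B|_U$ non-degenerate, the first paragraph gives $U\cap U^\perp=0$, so $\dim_\mathbb{F}(U+U^\perp)=\dim_\mathbb{F}U+\dim_\mathbb{F}U^\perp\ge\dim_\mathbb{F}U+(\dim_\mathbb{F}V-\dim_\mathbb{F}U)=\dim_\mathbb{F}V$. As $U+U^\perp\subseteq V$, equality must hold, so $U+U^\perp=V$; together with $U\cap U^\perp=0$ this gives $V=U\oplus U^\perp$.

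The step needing the most care is the bookkeeping forced by the $\psi_0$-sesquilinearity: one must form $\Theta$ using the variable of $B$ in which it is right $\mathcal{D}$-linear, and one must keep in mind that $\mathcal{D}$ is a graded division algebra and not a field, so every ``$\dim$'' is read as an $\mathbb{F}$-dimension (equivalently, $\mathcal{D}$-rank times $\dim_\mathbb{F}\mathcal{D}$). The only genuine verification hidden here is that $\mathrm{Hom}_\mathcal{D}(-,\mathcal{D})$ preserves $\mathbb{F}$-dimension; this holds because $U$ is graded-free over $\mathcal{D}$, and since $\psi_0$ is a bijection the twist it introduces affects neither the kernel nor the image of $\Theta$.
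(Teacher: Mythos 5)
Your proof is correct; the paper states Lemma~\ref{lem8} without proof (calling it standard), and your argument --- identifying the radical of $B|_U$ with $U\cap U^\perp$ via the balanced property from Lemma~\ref{lem7}, then forcing $U+U^\perp=V$ by the rank--nullity bound $\dim_\mathbb{F}U^\perp\ge\dim_\mathbb{F}V-\dim_\mathbb{F}U$ coming from $\Theta(v)=B(v,-)|_U$ --- is exactly the standard one the authors have in mind. Your closing caveat is well placed: the identity $\dim_\mathbb{F}\mathrm{Hom}_\mathcal{D}(U,\mathcal{D})=\dim_\mathbb{F}U$ uses that $U$ is graded-free over the graded division algebra $\mathcal{D}$, which holds in every application of the lemma in the paper since the subspaces $U$ arising in the construction of the basis \eqref{basis} are spanned by homogeneous vectors.
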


Now, using Lemma \ref{lem8}, we can construct a homogeneous $\mathcal{D}$-basis of $V$
\begin{equation}\label{basis}
\{v_1,\ldots,v_m,v_{m+1}',v_{m+1}'',\ldots,v_s',v_s''\},
\end{equation}
satisfying
\begin{enumerate}
\renewcommand{\labelenumi}{(\alph{enumi})}
\item $B(v_i,v_i)\ne0$, $i=1,2,\ldots,m$,
\item $B(v_j',v_j'')=1$, $j>m$,
\item all the remaining $B(v,w)=0$.
\end{enumerate}
Let $g_i=\deg v_i$, $g_j'=\deg v_j'$, $g_j''=\deg v_j''$. If $m>0$, then $T\ni\deg B(v_1,v_1)=g_0^{-1}$. Also,
$$
1=\deg B(v_j',v_j'')=g_0^{-1}g_j^{\prime-1}g_j'',
$$
so $g_j''=g_j'g_0$, for all $j>m$. Moreover, we have
\begin{Lemma}\label{lem11}
If $s>m$, then $g_0^2=1$.
\end{Lemma}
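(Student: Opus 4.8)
The plan is to exploit the hyperbolic pair guaranteed by the hypothesis $s>m$ and to read off the constraint on $g_0$ by computing the degree of a single value of $B$ in two different ways. Since $s>m$, there is at least one index $j>m$, and the basis \eqref{basis} supplies homogeneous vectors $v_j',v_j''$ with $B(v_j',v_j'')=1$ and, as recorded just before the statement, $\deg v_j''=g_j'g_0$ where $g_j'=\deg v_j'$.

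First I would feed the pair $(v_j',v_j'')$ into Lemma \ref{lem7}. Since $\psi_0$ is an anti-automorphism, it is unital ($\psi_0(1)=\psi_0(1)\psi_0(1)$ and $\psi_0$ is bijective, so $\psi_0(1)=1$), whence
\[
B(v_j'',v_j')=\varepsilon_B\,\psi_0\bigl(B(v_j',v_j'')\bigr)=\varepsilon_B\,\psi_0(1)=\varepsilon_B\in\{1,-1\}\subseteq\mathbb{F}^\times.
\]
In particular $B(v_j'',v_j')$ is a nonzero scalar, hence a homogeneous element of $\mathcal{D}$ of degree $1$.

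On the other hand, property (i) computes this same degree from the grading data. Using $\deg v_j''=g_j'g_0$, I would compute
\[
\deg B(v_j'',v_j')=g_0^{-1}(\deg v_j'')^{-1}\deg v_j'=g_0^{-1}(g_j'g_0)^{-1}g_j'=g_0^{-2}.
\]
Comparing with the previous display forces $g_0^{-2}=1$, that is, $g_0^2=1$, which is the assertion.

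I do not expect a genuine obstacle here; the argument is a short degree count, and the work already done in Lemmas \ref{lem5}--\ref{lem7} has isolated exactly the two facts needed. The only points requiring care are that the group $G$ need not be abelian, so the inverses must be taken in the correct order when simplifying $(g_j'g_0)^{-1}=g_0^{-1}(g_j')^{-1}$, and that one must invoke Lemma \ref{lem7} rather than property (i) alone: property (i) only tells us the degree of $B(v_j'',v_j')$, whereas the crucial input is that this off-diagonal value is actually the \emph{scalar} $\varepsilon_B$ (and therefore of degree $1$), which is precisely what Lemma \ref{lem7} delivers.
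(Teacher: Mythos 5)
Your argument is correct and is essentially identical to the paper's own proof: both apply Lemma \ref{lem7} to the hyperbolic pair to see that $B(v_j'',v_j')=\varepsilon_B$ is a nonzero scalar of degree $1$, and then compare with the degree $g_0^{-1}(g_j'g_0)^{-1}g_j'=g_0^{-2}$ given by property (i). Your added care about $\psi_0(1)=1$ and the order of inverses in a possibly non-abelian $G$ is sound but does not change the route.
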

\begin{proof}
Since
$$
B(v_s'',v_s')=\varepsilon_B\psi_0(B(v_s',v_s''))=\varepsilon_B1,
$$
we obtain $1=g_0^{-1}(g'')^{-1}g'=g_0^{-2}$. Thus, $g_0^2=1$.
\end{proof}

Now, if $\varepsilon_B=1$ then we call $\psi$ orthogonal, and otherwise, $\psi$ is symplectic. We note that $\varepsilon_B=-1$ implies $m=0$ in the previous notations. Using \eqref{PSI}, we can construct the matrix of $\Phi$, and determine $\psi$ in matrix form. It will be convenient to use the basis $\{v_1,\ldots,v_m,v_{m+1}',\ldots,v_s',v_{m+1}'',\ldots,v_s''\}$. We summarize the results

\begin{Thm}
Let $\mathcal{R}=M_n(\mathcal{D})$ be a matrix algebra endowed with a $G$-grading parametrized by $(\mathcal{D},\gamma)$. Then $\mathcal{R}$ admits a degree-inverting involution $\psi$ if and only if there exists $g_0\in G$, the graded division algebra $\mathcal{D}$ admits an involution $\psi_0$ satisfying $\deg\psi_0(d)=g_0^{-1}(\deg d)^{-1}g_0$, $\forall d\in\mathcal{D}$ homogeneous, and
$$
\gamma=(g_1,\ldots,g_m,g_{m+1}',\ldots,g_s',g_{m+1}'',\ldots,g_s'')
$$
where $g_j''=g_j'g_0$, for all $j>m$. Moreover, if $g_0\notin T$ then $m=0$; if $g_0\in T$, then we can assume $\psi_0$ a degree-inverting involution; and if $s>m$ then $g_0^2=1$.

Let $\{X_u\mid u\in T\}$ be a homogeneous basis of $\mathcal{D}$. In any case, $\psi(e_{ij}\otimes X)=\Phi^{-1}e_{ji}\otimes\psi_0(X)\Phi$, for $e_{ij}\otimes X\in\mathcal{R}$, where $\Phi$ is given by:

(i) if $\psi$ is orthogonal,
$$
\Phi=\left(\begin{array}{ccc}I_m\otimes X_{g_0}\\&0&I_s\otimes X_1\\&I_s\otimes X_1&0\end{array}\right).
$$

(ii) if $\psi$ is symplectic, then
$$
\Phi=\left(\begin{array}{cc}0&I_s\\-I_s&0\end{array}\right)\otimes X_1.
$$\qed
\end{Thm}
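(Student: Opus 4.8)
The plan is to assemble the ingredients developed in this section; the statement is a summary, so no genuinely new computation is needed beyond bookkeeping. For the forward implication, I would start from the main Proposition (cf.\ \cite[Theorem 2.57]{EK2013}): a degree-inverting anti-automorphism $\psi$ on $\mathcal{R}$ yields $g_0\in G$, an anti-automorphism $\psi_0$ on $\mathcal{D}$ with $\deg\psi_0(d)=g_0^{-1}(\deg d)^{-1}g_0$ (Lemma \ref{lem5}), and a non-degenerate $\psi_0$-sesquilinear form $B$ satisfying (i)--(iii). Specializing to the case $\psi^2=1$, Lemma \ref{lem7} gives the balancedness $B(w,v)=\varepsilon_B\psi_0(B(v,w))$ with $\varepsilon_B\in\{1,-1\}$, so that $B$ is balanced in the sense that $B(v,w)=0$ if and only if $B(w,v)=0$.

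Next I would produce the homogeneous basis \eqref{basis}. Using Lemma \ref{lem8}, I peel off non-degenerate one-dimensional pieces $\mathcal{D}v_i$ with $B(v_i,v_i)\ne0$ as long as such a homogeneous anisotropic vector exists, and I pair the remaining totally isotropic part into hyperbolic planes $\mathrm{Span}\{v_j',v_j''\}$ with $B(v_j',v_j'')=1$; balancedness guarantees the orthogonal complements behave as expected, and homogeneity is preserved because $B$ respects the grading. From this basis I read off the degree constraints exactly as in the paragraph preceding the theorem: property (i) forces $\deg B(v_i,v_i)=g_0^{-1}$, so $m>0$ implies $g_0\in T$ (equivalently $g_0\notin T$ forces $m=0$); likewise $1=\deg B(v_j',v_j'')$ gives $g_j''=g_j'g_0$; and Lemma \ref{lem11} yields $g_0^2=1$ whenever $s>m$. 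When $g_0\in T$, Remark \ref{remarkafterlemma} lets me replace $\varphi_1$ so as to assume $\psi_0$ is itself a degree-inverting involution. Finally, the observation that $\varepsilon_B=-1$ forces $m=0$ (recorded just before the statement) separates the two cases.

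For the matrix form I would reorder the basis as $\{v_1,\dots,v_m,v_{m+1}',\dots,v_s',v_{m+1}'',\dots,v_s''\}$ and compute the Gram matrix $\Phi=(B(w_i,w_j))$ block by block. The anisotropic part contributes the diagonal block $I_m\otimes X_{g_0}$ after scaling each $v_i$ so that $B(v_i,v_i)$ is the fixed homogeneous unit; each hyperbolic pair contributes a $2\times2$ block, and balancedness with sign $\varepsilon_B$ fixes whether the off-diagonal entry is $+X_1$ or $-X_1$, producing the displayed $\Phi$ in the orthogonal ($\varepsilon_B=1$) and symplectic ($\varepsilon_B=-1$, hence $m=0$) cases. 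Substituting $\Phi$ into \eqref{PSI} gives $\psi(e_{ij}\otimes X)=\Phi^{-1}e_{ji}\otimes\psi_0(X)\Phi$. The converse implication is then immediate: given the data $(g_0,\psi_0,\gamma)$ with the stated constraints one defines $\Phi$ and $\psi$ by these same formulas; the converse half of the main Proposition yields a degree-inverting anti-automorphism, and the balancedness built into $\Phi$ ensures $\psi^2=1$.

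I expect the main obstacle to be the construction of the homogeneous orthogonal/hyperbolic basis \eqref{basis}: over a graded division algebra one cannot simply invoke field-theoretic diagonalization, so one must argue that anisotropic homogeneous vectors and hyperbolic pairs can be extracted while keeping all chosen vectors homogeneous, relying on Lemma \ref{lem8} at each step. The remaining difficulty is purely clerical---tracking degrees and the sign $\varepsilon_B$ through $\Phi$ to match the stated normal forms.
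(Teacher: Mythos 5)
Your proposal is correct and follows essentially the same route as the paper: the theorem is explicitly a summary of the section, and the paper's (implicit) proof is exactly the assembly you describe --- the Proposition for the data $(g_0,\psi_0,B)$, Lemma \ref{lem7} for balancedness, Lemma \ref{lem8} for the homogeneous orthogonal/hyperbolic basis \eqref{basis}, the degree bookkeeping plus Lemma \ref{lem11} and Remark \ref{remarkafterlemma} for the constraints on $\gamma$ and $\psi_0$, and \eqref{PSI} for the matrix form of $\psi$. Your flagged obstacle (extracting homogeneous anisotropic vectors and hyperbolic pairs over $\mathcal{D}$) is handled in the paper exactly as you suggest, by iterating Lemma \ref{lem8}.
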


\begin{Remark}
It is worth mentioning that, if $G$ is assumed to be abelian, then we obtain a complete description of degree-inverting involutions on $M_n(\mathcal{D})$: the involution $\psi_0$ on $\mathcal{D}$ will be degree-inverting, and we apply Corollary \ref{commutativegradeddiv}.
\end{Remark}

\section{Degree-inverting involution on upper triangular matrices\label{utn}}
In this section we shall classify degree-inverting involution on the algebra of upper triangular matrices. The final result is similar to the degree-preserving involution case \cite{DVKS}. However, in the degree-inverting case, the support of the grading does not need to be commutative. We shall improve the result obtained in \cite{FM2017}, since we only impose the restriction $\mathrm{char}\,\mathbb{F}\ne2$.

Let $\mathbb{F}$ be an arbitrary field of characteristic not $2$, and $G$ any group. It is known that every group grading on $UT_n$ is elementary \cite{VZ2007}, that is, every grading admits an isomorphic structure where each matrix unit $e_{ij}$ is homogeneous. Moreover, an isomorphism class of $G$-gradings on $UT_n$ is uniquely determined by a sequence $\eta=(g_1,\ldots,g_{n-1})\in G^{n-1}$, where $\deg e_{i,i+1}=g_i$, for $i=1,2,\ldots,n-1$ (see \cite[Theorem 2.3]{DVKV2004}).

From now on, we fix a $G$-grading on $UT_n$, given by $\eta=(g_1,g_2,\ldots,g_{n-1})$. Let $J=J(UT_n)$ be the Jacobson radical, which is clearly a graded ideal. We denote by $\tau$ the canonical involution of $UT_n$, that is, $\tau(e_{ij})=e_{n-j+1,n-i+1}$. Note that $\tau$ is the flip along the secondary diagonal of $M_n$.

Let $\rho$ be a degree-inverting involution of $UT_n$. Since $\rho(J^m)=J^m$, for every $m\ge1$, we have that $\rho$ is a degree-inverting involution on $J/J^2$. Moreover, we know that every automorphism of $UT_n$ is inner (see, for instance, \cite{Jondrup}); hence, $\rho=\mathrm{Int}(u)\circ\tau$, for some inner automorphism $\mathrm{Int}(u)$ (where $u\in UT_n$ is invertible). Thus, $\rho(e_{i,i+1}+J^2)=e_{n-i,n-i+1}+J^2$; that is, $\deg e_{i,i+1}=\left(\deg e_{n-i,n-i+1}\right)^{-1}$. This proves

\begin{Lemma}\label{lem1}
$(UT_n,\eta)$ admits a degree-inverting involution if, and only if, $g_i=g_{n-i+1}^{-1}$ for each $i=1,2,\ldots,\lceil\frac{n}2\rceil$.
\end{Lemma}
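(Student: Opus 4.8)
The plan is to establish both implications, taking the canonical involution $\tau$ as the explicit witness for sufficiency. The necessity is already contained in the computation preceding the statement, so the genuinely new work is the converse, together with one point in the forward direction that deserves a careful justification.

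For necessity I would argue as follows. Given a degree-inverting involution $\rho$, the composite $\rho\circ\tau$ is an automorphism of $UT_n$ (a product of two anti-automorphisms), hence inner, say $\rho\circ\tau=\mathrm{Int}(u)$ with $u\in UT_n$ invertible; thus $\rho=\mathrm{Int}(u)\circ\tau$. The step I want to pin down is that $\mathrm{Int}(u)$ cannot move the superdiagonal classes around in $J/J^2$: writing $u=\sum_k a_k e_{kk}+N$ with $a_k\in\mathbb{F}^\times$ and $N$ strictly upper triangular, the commutator terms land in $J^2$, so that $u\,e_{p,p+1}\,u^{-1}\equiv a_p a_{p+1}^{-1}\,e_{p,p+1}\pmod{J^2}$. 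Hence $\mathrm{Int}(u)$ fixes each class $e_{p,p+1}+J^2$ up to a nonzero scalar and in particular preserves its degree. Combining this with $\tau(e_{i,i+1})=e_{n-i,n-i+1}$ gives $\rho(e_{i,i+1}+J^2)=c\,e_{n-i,n-i+1}+J^2$ with $c\neq0$; since $\rho$ inverts degrees, the degree-$g_i^{-1}$ component must meet the degree-$g_{n-i}$ component, forcing $g_i^{-1}=g_{n-i}$, which is the asserted symmetry.

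For sufficiency I would show that, once the symmetry relation holds, $\tau$ is itself a degree-inverting involution, so no inner correction is required. Since $\tau$ is an $\mathbb{F}$-linear anti-automorphism with $\tau^2=\mathrm{id}$ regardless of the grading, only the degree condition needs checking. Using $\deg e_{ij}=g_ig_{i+1}\cdots g_{j-1}$ for $i<j$ (immediate from the elementary grading) and $\tau(e_{ij})=e_{n-j+1,n-i+1}$, I would compute
\[
\deg\tau(e_{ij})=g_{n-j+1}g_{n-j+2}\cdots g_{n-i},
\]
and then substitute $g_{n-k}=g_k^{-1}$ factor by factor; reversing the order of the product (as in inverting a product in $G$) turns this into $g_{j-1}^{-1}\cdots g_i^{-1}=(\deg e_{ij})^{-1}$. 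By linearity $\tau$ inverts all degrees, completing the converse.

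The hard part will be the forward direction, and specifically the claim that $\mathrm{Int}(u)$ acts on $J/J^2$ by scalars on each basis class rather than by a nontrivial permutation; this is exactly what forces $\rho$ to realize the flip $i\mapsto n-i$ and nothing else. The explicit congruence $u\,e_{p,p+1}\,u^{-1}\equiv a_p a_{p+1}^{-1}\,e_{p,p+1}\pmod{J^2}$ disposes of it by reducing the conjugation to an invisible diagonal rescaling, after which both directions close and the sufficiency step is just the indicated index bookkeeping.
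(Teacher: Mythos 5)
Your proof is correct and follows essentially the same route as the paper: writing $\rho=\mathrm{Int}(u)\circ\tau$ via innerness of automorphisms of $UT_n$, reading off the degree condition from the induced action on $J/J^2$, and checking directly that $\tau$ inverts degrees under the symmetry hypothesis --- you merely make explicit the two steps the paper leaves implicit (that $\mathrm{Int}(u)$ acts by nonzero scalars on each class $e_{p,p+1}+J^2$, and the product computation for $\deg\tau(e_{ij})$). Note that the relation you derive, $g_i=g_{n-i}^{-1}$, is exactly what the paper's own displayed computation gives; the ``$g_{n-i+1}^{-1}$'' in the Lemma's statement appears to be an index slip.
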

\begin{proof}
The argument above proves the ``only if" part. The ``if" part is obvious, since $\tau$ will invert degree, under this condition.
\end{proof}
\begin{Remark}
Note that, in contrast with the graded-involution case, the existence of a degree-inverting involution does not imply that the support of the grading is commutative.
\end{Remark}

Now, assume from now on that $\eta$ satisfies the condition of Lemma \ref{lem1}. It is clear that $\tau$ is a degree-inverting involution in this case. Since we wrote $\rho=\mathrm{Int}(u)\circ\tau$, we note that $\mathrm{Int}(u)$ is a graded automorphism of $UT_n$. Thus, $u$ is homogeneous of degree $1$. Moreover, since $\rho^2=1$, one has $\tau(u)=\pm u$. We note that $\tau(u)=-u$ happens only if $n$ is even. Indeed, if $n=2m+1$, then $\tau(e_{m+1,m+1})=e_{m+1,m+1}$. Since $u$ is invertible, the entry $(m+1,m+1)$ of $u$ must be nonzero; and at the same time, it should coincide with its opposite, a contradiction.

Suppose $n=2m$, and let $D=\mathrm{diag}(1,\ldots,1,-1,\ldots,-1)$. The involution $s(x)=D\tau(x)D$ is called the \emph{symplectic involution} of $UT_n$.

Finally, if $n=2m+1$, then we can multiply $u$ by some scalar (note that, $\mathrm{Int}(u)=\mathrm{Int}(\lambda u)$), in such a way that its $m+1$ entry is $1$ (this is an important step in the proof of the next lemma, see \cite[Lemma 2.4]{DVKS}). Also, if $\tau(u)=-u$, then
$$
\rho(x)=u\tau(x)u^{-1}=uDD\tau(x)DDu^{-1}=\mathrm{Int}(uD)(s(x)).
$$
In this case, $s(uD)=uD$. So, we can replace $u$ by $uD$ to obtain $s(u)=u$. Hence, in any case, we always obtain the equation
$$
\rho=\mathrm{Int}(u)\circ\rho_0,
$$
with $\rho_0(u)=u$, where $\rho_0$ is either $\tau$ or $s$.

\begin{Lemma}
Assume $\mathrm{char}\,\mathbb{F}\ne2$. Let $u$ be an invertible homogeneous element of degree $1$. Let $\rho_0$ be either $\tau$ or $s$, in such a way that $\rho_0(u)=u$; and if $n=2m+1$, assume that the entry $(m+1,m+1)$ of $u$ is $1$. Then there exists a homogeneous invertible element $v\in UT_n$, of degree 1, such that $u=v\rho_0(v)$.
\end{Lemma}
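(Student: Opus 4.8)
The plan is to produce $v$ in two stages: first absorb the diagonal of $u$ into a diagonal factor, reducing to the case where $u$ is unipotent, and then extract a $\rho_0$-symmetric square root of the unipotent part. Throughout, recall that in the elementary grading every diagonal matrix is homogeneous of degree $1$, and that both $\tau$ and $s$ are degree-inverting involutions that preserve each superdiagonal; in particular the equation $\rho_0(u)=u$ forces the diagonal entries of $u$ to satisfy $u_{ii}=u_{n-i+1,n-i+1}$.

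First I would treat the diagonal. Pairing the index $i$ with $n-i+1$, I choose a diagonal matrix $d$ by setting $d_{ii}=u_{ii}$ and $d_{n-i+1,n-i+1}=1$ for $i\le\lfloor n/2\rfloor$; when $n=2m+1$ the central index is its own partner, and I set $d_{m+1,m+1}=1$, which is legitimate precisely because the normalization hypothesis gives $u_{m+1,m+1}=1$, so no square root of a general field element is required. A short check using that $\rho_0$ acts on diagonal matrices by reversing the diagonal shows $d\rho_0(d)$ has the same diagonal as $u$; hence $u_1:=d^{-1}u\rho_0(d)^{-1}$ is unipotent, still satisfies $\rho_0(u_1)=u_1$, and is homogeneous of degree $1$. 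If I can write $u_1=w\rho_0(w)$ with $w$ upper triangular and homogeneous of degree $1$, then $v:=dw$ does the job, since $\rho_0(v)=\rho_0(w)\rho_0(d)$ gives $v\rho_0(v)=d\,u_1\,\rho_0(d)=u$.

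It remains to handle the unipotent symmetric $u_1=1+N$, where $N$ is strictly upper triangular, homogeneous of degree $1$, and $\rho_0(N)=N$. Here I would construct the unipotent square root $w=1+M$ by the iteration $M^{(0)}=0$, $M^{(r+1)}=\tfrac12\bigl(N-(M^{(r)})^2\bigr)$, which stabilizes after finitely many steps because $N$ is nilpotent and each iteration only divides by $2$ (legitimate since $\operatorname{char}\mathbb{F}\ne2$). The limit $M$ is a polynomial in $N$ with coefficients in $\mathbb{Z}[1/2]$ satisfying $2M+M^2=N$, i.e. $w^2=1+N=u_1$; equivalently $M=\sum_{k\ge1}\binom{1/2}{k}N^k$, the key point being that these binomial coefficients lie in $\mathbb{Z}[1/2]$. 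Being a polynomial in $N$ with no constant term, $M$ is strictly upper triangular and homogeneous of degree $1$, and $\rho_0(w)=1+\rho_0(M)=1+M=w$ because $\rho_0(N^k)=\rho_0(N)^k=N^k$. Consequently $u_1=w^2=w\rho_0(w)$, as needed.

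The main obstacle is the square-root extraction under the sole assumption $\operatorname{char}\mathbb{F}\ne2$: one cannot invoke the binomial series naively, since a priori its coefficients might carry odd denominators. The two things that make it work are that the dyadic iteration $M^{(r+1)}=\tfrac12\bigl(N-(M^{(r)})^2\bigr)$ never divides by anything but $2$, and that the resulting square root, being a polynomial in the $\rho_0$-fixed element $N$, is automatically $\rho_0$-symmetric, so that $w\rho_0(w)$ collapses to $w^2$. The diagonal normalization in the odd case is exactly what prevents a genuine, possibly unavailable, square root of $u_{m+1,m+1}$ from being needed.
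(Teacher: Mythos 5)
Your proof is correct, but it follows a genuinely different route from the paper's. The paper reproduces the explicit construction of \cite[Lemma 2.4]{DVKS}: for instance, for $n=2m$ and $\rho_0=\tau$ it writes $u$ in $2\times2$ block form with diagonal blocks $X,Y\in UT_m$ and corner block $Z$, sets $v$ to be the block matrix with blocks $\mathrm{Id}_m$, $\tfrac12Z$, $Y$, verifies $v\tau(v)=u$ directly from $\tau(u)=u$, and obtains homogeneity of $v$ for free because the set of nonzero entries of $v$ is contained in that of $u$; the other cases ($n$ odd, $\rho_0=s$) are handled by analogous explicit formulas. You instead first strip off the diagonal by an elementary-graded (hence degree-one) diagonal factor $d$ with $d\rho_0(d)$ matching the diagonal of $u$, and then extract an honest square root $w=1+M$ of the unipotent $\rho_0$-fixed remainder $1+N$, using that $\binom{1/2}{k}=\frac{(-1)^{k-1}}{2^{2k-1}}C_{k-1}$ (Catalan number $C_{k-1}$) lies in $\mathbb{Z}[1/2]$ --- equivalently, that your dyadic iteration stabilizes $N$-adically since $N$ is nilpotent. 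Both arguments are complete. The paper's is shorter per case but requires separate block bookkeeping for each combination of parity and $\rho_0$; yours is uniform in $n$ and in $\rho_0$, isolates exactly where the two hypotheses enter ($\mathrm{char}\,\mathbb{F}\ne2$ for the square root, and the normalization $u_{m+1,m+1}=1$ so that no square root of a general field element is needed at the central diagonal entry), and gets the $\rho_0$-symmetry of the unipotent factor automatically from its being a polynomial in the $\rho_0$-fixed element $N$ rather than by inspection of matrix entries.
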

\begin{proof}
The proof is exactly the construction of the proof of Lemma 2.4 of \cite{DVKS} (see also \cite[Lemma 6.9]{FM2017}). As an example, we include here the case $n=2m$, and $\rho_0=\tau$. Write
$$
u=\left(\begin{array}{cc}
X&Z\\
0&Y
\end{array}\right),
$$
where $X,Y\in UT_m$ are invertible, and $Z\in M_m$. Then
$$
v=\left(\begin{array}{cc}
\mathrm{Id}_m&\frac12Z\\
0&Y
\end{array}\right)
$$
satisfies $u=v\tau(v)$. Moreover, let $\mathcal{U}$ be the set of pairs $(i,j)$ such that $u=\sum_{(i,j)\in\mathcal{U}}\alpha_{ij}e_{ij}$, for $\alpha_{ij}\ne0$. Since $u$ is homogeneous of degree $1$, and every matrix unit is homogeneous; $\deg e_{ij}=1$, for all $(i,j)\in\mathcal{U}$. Now, by construction, $v=\sum_{(i,j)\in\mathcal{U}'}\beta_{ij}e_{ij}$, for some $\mathcal{U}'\subseteq\mathcal{U}$. In particular, $v$ is a linear combination of homogeneous elements of degree $1$. This imply $v$ homogeneous of degree $1$.

The proof is similar for the other cases.
\end{proof}

As a conclusion, $\rho=\mathrm{Int}(u)\circ\rho_0=\mathrm{Int}(v)\circ\mathrm{Int}(\rho_0(v))\circ\rho_0$, where $\rho_0$ is either $\tau$ or $s$, and $\rho_0(u)=u$. A straightforward argument shows that, in this case, $\rho$ is equivalent to $\rho_0$. Indeed, we need to find a graded automorphism $\varphi$ such that $\varphi(\rho_0(x))=\rho(\varphi(x))$. Taking $\varphi=\mathrm{Int}(v)$, we have
$$
\rho(\varphi(x))=\rho(vxv^{-1})=\mathrm{Int}(v)\mathrm{Int}(\rho_0(v))\rho_0(vxv^{-1})=\mathrm{Int}(v)(\rho_0(x))=\varphi(\rho_0(x)).
$$
We summarize our main result of this section:
\begin{Thm}
Let $\mathbb{F}$ be a field of characteristic not $2$, and $G$ any group. Let $(UT_n,\eta)$ be $G$-graded, where $\eta=(g_1,g_2,\ldots,g_{n-1})$. Then $(UT_n,\eta)$ admits a degree-inverting involution if, and only if, $g_i=g_{n-i+1}^{-1}$, for all $i=1,2,\ldots,n-1$. In this case, every degree-inverting involution is equivalent either to $\tau$ or to $s$; where $s$ can occur if, and only if, $n$ is even.\qed
\end{Thm}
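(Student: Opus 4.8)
The plan is to treat the theorem as a synthesis of the discussion preceding it, supplying the one point that has not yet been verified, namely that $\tau$ and $s$ are genuinely inequivalent. For the existence criterion, I would invoke Lemma \ref{lem1}, which gives the condition $g_i = g_{n-i+1}^{-1}$ for $i = 1, \ldots, \lceil n/2 \rceil$. The only thing to observe is that this relation is self-paired: replacing $i$ by $n-i+1$ turns the equation $g_i = g_{n-i+1}^{-1}$ into $g_{n-i+1} = g_i^{-1}$, which is the very same equation inverted. Hence the condition over the indices $i \le \lceil n/2 \rceil$ is equivalent to the condition over all $i = 1, \ldots, n-1$, as stated.

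For the classification up to equivalence, I would assume the grading condition and recall from the paragraphs preceding the theorem that an arbitrary degree-inverting involution $\rho$ can be written as $\rho = \mathrm{Int}(u) \circ \rho_0$ with $u$ homogeneous of degree $1$, $\rho_0(u) = u$, and $\rho_0 \in \{\tau, s\}$ (the case $\rho_0 = s$ arising exactly when $\tau(u) = -u$, which forces $n$ even). The factorization lemma above then yields a homogeneous invertible $v$ of degree $1$ with $u = v\rho_0(v)$, and the short conjugation computation already carried out shows that $\varphi = \mathrm{Int}(v)$ satisfies $\varphi \circ \rho_0 = \rho \circ \varphi$, so $\rho$ is equivalent to $\rho_0$. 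This delivers the two candidate classes and the fact that every degree-inverting involution falls into one of them.

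The substantive remaining step, and the one I expect to be the main obstacle, is the ``if'' half of the final assertion: that when $n = 2m$ is even, $s$ really is a new equivalence class, i.e. $s \not\sim \tau$. I would argue by contradiction. Since graded automorphisms of $UT_n$ are inner, an equivalence $\varphi \circ \tau = s \circ \varphi$ would be realized by $\varphi = \mathrm{Int}(w)$ with $w$ homogeneous of degree $1$ and invertible. Writing $s = \mathrm{Int}(D) \circ \tau$ (valid since $D^2 = \mathrm{Id}$) and unwinding the relation, using that $\tau$ is an anti-automorphism and $D^{-1} = D$, reduces the condition to $w\tau(w) = \lambda D$ for some $\lambda \in \mathbb{F}^\times$. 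But $w\tau(w)$ is always $\tau$-symmetric, $\tau(w\tau(w)) = w\tau(w)$, whereas a direct inspection gives $\tau(D) = -D$, so $\tau(\lambda D) = -\lambda D$; comparing the two forces $2\lambda D = 0$, which is impossible since $\mathrm{char}\,\mathbb{F} \ne 2$ and both $\lambda$ and $D$ are invertible. Hence $\tau \not\sim s$.

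Conversely, for odd $n$ the reduction always produces $\rho_0 = \tau$ — the sign $\tau(u) = -u$ being impossible there, as shown via the central diagonal entry, and $D$ being undefined — so $s$ cannot occur, giving the ``only if'' direction. The crux of the whole argument is precisely the sign computation $\tau(D) = -D$ played against the forced symmetry of $w\tau(w)$; this is the single genuinely new verification beyond assembling the earlier lemmas, and it is exactly where the hypothesis $\mathrm{char}\,\mathbb{F} \ne 2$ is indispensable.
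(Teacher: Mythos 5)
Your proposal is correct and follows the same route as the paper: Lemma \ref{lem1} for the existence criterion (with the correct observation that the condition over $i\le\lceil n/2\rceil$ self-pairs to the condition over all $i$), then the reduction $\rho=\mathrm{Int}(u)\circ\rho_0$ with $\rho_0(u)=u$, the factorization $u=v\rho_0(v)$, and the conjugation computation $\mathrm{Int}(v)\circ\rho_0=\rho\circ\mathrm{Int}(v)$ — all of which is exactly the paper's preceding discussion, which is why the theorem carries only a \qed. The one place you go beyond the paper is the explicit verification that $\tau\not\sim s$ for $n=2m$: the paper reads ``$s$ can occur if and only if $n$ is even'' as the statement that the symplectic case arises in the reduction only for even $n$ (for odd $n$ the matrix $D$ does not exist and $\tau(u)=-u$ is impossible), and never addresses whether $s$ and $\tau$ might coincide up to equivalence. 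Your argument for this — an equivalence $\mathrm{Int}(w)\circ\tau=s\circ\mathrm{Int}(w)$ forces $\tau(w)Dw$ central, hence $w\tau(w)=\lambda D$, which is impossible since $w\tau(w)$ is $\tau$-symmetric while $\tau(D)=-D$ and $\mathrm{char}\,\mathbb{F}\ne2$ — is correct and is a genuine addition that sharpens the theorem to the assertion that $\tau$ and $s$ represent two distinct classes; it does not even require $w$ to be homogeneous, only that every automorphism of $UT_n$ is inner.
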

Our definition of elementary grading on $UT_n$ is not the standard one. Usually one defines an elementary grading on $UT_n$ as we did for matrix algebras, that is, a sequence $\gamma=(h_1,h_2,\ldots,h_n)\in G^n$ defines a $G$-grading on $UT_n$ by $\deg e_{ij}=h_ih_j^{-1}$. However, we cannot find a friendly way to write the condition of existence of a degree-inverting involution on $UT_n$ in the standard notation. Nonetheless, if the grading group is abelian then the condition is nicely written, and we reobtain a result of \cite{FM2017}:
\begin{Cor}[{\cite[Corollary 5.11]{FM2017}}]
Let $\mathbb{F}$ be a field of characteristic not $2$, and $G$ be an abelian group. Let $UT_n$ be endowed with an elementary $G$-grading given by $\gamma=(h_1,\ldots,h_n)$. Then $UT_n$ admits a degree-inverting involution if and only if $h_1h_n^{-1}=h_2h_{n-1}^{-1}=\cdots=h_nh_1^{-1}$. In this case, every degree-inverting involution is equivalent either to $\tau$ or to $s$; where $s$ can occur if, and only if, $n$ is even.\qed
\end{Cor}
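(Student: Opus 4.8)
The plan is to derive this statement from the preceding Theorem by a mere change of parametrization, the abelian hypothesis being used only to rewrite the existence condition symmetrically. First I would match the two descriptions of an elementary grading on $UT_n$. The standard datum $\gamma=(h_1,\ldots,h_n)$ prescribes $\deg e_{ij}=h_ih_j^{-1}$, so in particular $\deg e_{i,i+1}=h_ih_{i+1}^{-1}$; conversely these superdiagonal degrees determine the whole grading, since $\deg e_{ij}=(h_ih_{i+1}^{-1})\cdots(h_{j-1}h_j^{-1})=h_ih_j^{-1}$ for $i<j$. Hence the grading given by $\gamma$ is identical to the one given by $\eta=(g_1,\ldots,g_{n-1})$ with $g_i:=h_ih_{i+1}^{-1}$, and the Theorem applies to it verbatim. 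It then suffices to translate the Theorem's existence condition---namely that $\tau$ reverse the degrees of the superdiagonal generators, i.e.\ $g_ig_{n-i}=1$ for $i=1,\ldots,n-1$---into the variables $h_1,\ldots,h_n$.

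The key computation is then purely in the group $G$. Substituting $g_i=h_ih_{i+1}^{-1}$, the condition $g_ig_{n-i}=1$, equivalently $g_i=g_{n-i}^{-1}$, becomes $h_ih_{i+1}^{-1}=h_{n-i+1}h_{n-i}^{-1}$. Here I would invoke commutativity of $G$ to rearrange this as $h_ih_{n-i+1}^{-1}=h_{i+1}h_{n-i}^{-1}$. Writing $c_j:=h_jh_{n-j+1}^{-1}$, the identity reads $c_i=c_{i+1}$; so the family of conditions for $i=1,\ldots,n-1$ collapses to the single chain $c_1=c_2=\cdots=c_n$, that is $h_1h_n^{-1}=h_2h_{n-1}^{-1}=\cdots=h_nh_1^{-1}$, exactly as asserted. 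As a sanity check, the pairing $j\mapsto n-j+1$ is an involution with $c_{n-j+1}=c_j^{-1}$, so the chain forces $c_1=c_1^{-1}$, consistent with its two stated endpoints $c_1=h_1h_n^{-1}$ and $c_n=h_nh_1^{-1}$.

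The classification part requires nothing new: since the $\gamma$-grading and the $\eta$-grading are literally the same grading on $UT_n$, the Theorem's conclusion transfers unchanged, giving that every degree-inverting involution is equivalent to $\tau$ or to $s$, with $s$ possible precisely when $n$ is even.

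I expect no genuine conceptual obstacle here; the only point demanding care is the index bookkeeping, where commutativity is exactly what allows the $n-1$ separate constraints to be folded into one symmetric chain. The chief risk is an off-by-one slip in the pairing of superdiagonal positions under $\tau$, which is why I would first pin down $\tau(e_{i,i+1})=e_{n-i,n-i+1}$ before carrying out the translation.
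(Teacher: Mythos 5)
Your proposal is correct and follows exactly the route the paper intends: the corollary is stated without proof precisely because it is the Theorem of Section 5 rewritten via the substitution $g_i=h_ih_{i+1}^{-1}$, with commutativity used to fold the conditions $g_i=g_{n-i}^{-1}$ into the single chain $h_1h_n^{-1}=\cdots=h_nh_1^{-1}$. Your index bookkeeping is in fact more careful than the paper's own statement of Lemma \ref{lem1} (which writes $g_{n-i+1}$ where the derivation from $\tau(e_{i,i+1})=e_{n-i,n-i+1}$ gives $g_{n-i}$), and your version is the correct one.
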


\end{document}